\documentclass[11pt]{article}

\usepackage{amsmath, amssymb}
\usepackage{tikz}

\usepackage{authblk}
\usepackage{geometry}
\usepackage{hyperref}
\usepackage{graphicx}
\usepackage{epstopdf}
\usepackage{mathrsfs}
\usepackage{amsfonts}
\usepackage{amscd,amsthm,bm,psfrag}
\usepackage[numbers,sort&compress]{natbib}

\makeatletter
\renewcommand{\@seccntformat}[1]{{\csname the#1\endcsname}{\normalsize.}\hspace{.5em}}

\makeatother

\numberwithin{equation}{section}

\def \[{\begin{equation}}
\def \]{\end{equation}}

\newtheorem{thm}{Theorem}[section]

\newtheorem{lem}[thm]{Lemma}

\newtheorem{prop}[thm]{Proposition}
\newtheorem{rem}[thm]{Remark}
\newtheorem{con}[thm]{Conjecture}
\newtheorem{prob}[thm]{Problem}
\newtheorem{cla}{Claim}

\newtheorem*{thm*}{Theorem}
\newtheorem*{prop*}{Proposition}

\newtheorem{que}[thm]{Question}

\newcommand\ex{\ensuremath{\mathrm{ex}}}

\newcommand\cF{{\mathcal F}}
\newcommand\cG{{\mathcal G}}
\newcommand\cH{{\mathcal H}}

\newcommand\cL{{\mathcal L}}

\begin{document}
\title{
{Counting sunflowers in hypergraphs with bounded matching number and Erd\H{o}s Matching Conjecture in the $(t,k)$-norm} 
}

\author[a,b,*]{Junpeng Zhou\,$^{\rm a,b}$, \ Xiying Yuan\,}

\affil[a]{\small \,Department of Mathematics, Shanghai University, Shanghai 200444, PR China}
\affil[b]{\small \,Newtouch Center for Mathematics of Shanghai University, Shanghai 200444, PR China}

\date{}

\maketitle

\footnotetext{*\textit{Corresponding author}.}
\footnotetext{
Email addresses: \texttt{junpengzhou@shu.edu.cn} (J. Zhou), 
\texttt{xiyingyuan@shu.edu.cn} (X. Yuan).}





\begin{abstract}
It is well known that Erd\H{o}s Matching Conjecture concerns the maximum number of hyperedges in an $r$-uniform hypergraph with bounded matching number. As a generalization, it is natural to ask for the maximum number of copies of subhypergraphs. Given integers $r\geq2$ and $k\ge 1$, let $S_{r-1,k}^r$ denote the $r$-uniform hypergraph with hyperedges $\{e_1, \dots, e_k\}$ such that there exists an $(r-1)$-set $T$ with $e_i \cap e_j = T$ for $1\le i < j \le k$. We determine the maximum number of copies of $S_{r-1,k}^r$ in an $r$-uniform hypergraph with bounded matching number, and characterize all extremal hypergraphs. An interesting phenomenon is that the extremal numbers and extremal hypergraphs are exactly the same for all $k\ge 1$.

Our main tool is the shifting method. By establishing an injection, we prove that the shifting operation does not decrease the number of copies of $S_{r-1,k}^r$ for all $k\geq1$, thereby answering a question raised by Wang and Peng (EJC, 2026). Moreover, we present a counting method for estimating the number of copies of $S_{r-1,k}^r$ in arbitrary $r$-uniform hypergraphs.

Counting the number of copies of $S_{r-1,k}^r$ in $r$-uniform hypergraphs is closely related to Tur\'{a}n problems in the $(r-1,k)$-norm proposed by Chen, Il'kovi\v{c}, Le\'{o}n, Liu and Pikhurko. The $(r-1,k)$-norm of an $r$-uniform hypergraph $\mathcal{H}$ is the sum of the $k$-th power of the degrees $d_{\mathcal{H}}(T)$ over all $(r-1)$-subsets $T \subseteq V(\mathcal{H})$. Combining our established result with that of Frankl (JCTA, 2013), and utilizing the Newton expansion of powers and Stirling numbers of the second kind, we show that Erd\H{o}s Matching Conjecture in the $(r-1,k)$-norm holds, which generalizes the result of Brooks and Linz concerning the $(r-1,2)$-norm case. As a consequence, we obtain a version of the classical Erd\H{o}s--Ko--Rado theorem in the $(r-1,k)$-norm.
\end{abstract}

{\noindent{\bf Keywords:} hypergraph, sunflower, matching number, Erd\H{o}s Matching Conjecture}

{\noindent{\bf AMS subject classifications:} 05C35, 05C65}

\section{\normalsize Introduction}
An \textit{$r$-uniform hypergraph} ($r$-graph for short) $\cH=(V(\cH),E(\cH))$ consists of a vertex set $V(\cH)$ and a hyperedge set $E(\cH)$, where each hyperedge in $E(\cH)$ is an $r$-subset of $V(\cH)$. The size of $E(\cH)$ is denoted by $e(\cH)$. Throughout this paper, we identify hypergraphs with their hyperedge sets. Let $\binom{V}{r}$ denote the family of all $r$-subsets of $V$, i.e., the complete $r$-graph on $V$. For positive integers $n\geq m$, we use $[n]$ and $[m,n]$ for the sets $\{1,2,\dots,n\}$ and $\{m,m+1,\dots,n\}$, respectively.

Let $\mathcal{F}$ be an $r$-graph. An $r$-graph $\cH$ is \textit{$\mathcal{F}$-free} if $\cH$ does not contain $\mathcal{F}$ as a subhypergraph. The \textit{Tur\'{a}n number} of $\mathcal{F}$, denoted by ${\rm{ex}}_r(n,\mathcal{F})$, is the maximum number of hyperedges in an $n$-vertex $\mathcal{F}$-free $r$-graph.
A classical result in extremal graph theory is Erd\H{o}s--Gallai theorem \cite{EGa}, which determines the Tur\'{a}n number of $M_{s}$, where $M_{s}$ denotes a matching of size $s$, i.e., the graph consisting of $s$ independent edges.

Given two graphs $H$ and $F$, Alon and Shikhelman \cite{AlSh} initiated the systematic study of the problem of maximizing the number of copies of $H$ in an $n$-vertex $F$-free graph, which is often called the \textit{generalized Tur\'{a}n problem}. Subsequently, Wang \cite{Wang} determined the maximum number of cliques in an $n$-vertex $M_{s}$-free graph. For a very recent survey on generalized Tur\'{a}n problems, one can refer to the work of Gerbner and Palmer \cite{GePa}.

It is well known that hypergraph Tur\'{a}n problems are considerably more difficult than in the graph setting. In 1965, Erd\H{o}s \cite{Er1} proposed the following conjecture for matchings. Let $M^r_{s}$ denote a matching of size $s$ in an $r$-graph.

\begin{con}[Erd\H{o}s Matching Conjecture~\cite{Er1}]\label{con1}
Let integers $s,r\geq2$ and $n\geq sr-1$. Then $$\ex_r(n,M_{s}^r)\leq \max\left\{ \binom{sr-1}{r}, \binom{n}{r} - \binom{n-s+1}{r} \right\}.$$
\end{con}

The above conjecture is known to hold for the cases $r=2$ \cite{EGa} and $r=3$ \cite{Fran2017,LuMi}. For the general case, Erd\H{o}s \cite{Er1} verified Conjecture \ref{con1} for large $n$. Later, the threshold was improved in several papers by Bollob\'as, Daykin, Erd\H{o}s \cite{68}, Huang, Loh, Sudakov \cite{312}, Frankl, \L uczak, Mieczkowska \cite{215}. For further developments and relevant results concerning this conjecture, one can refer to \cite{Fr1,FrKu,213,KoKu}.
Very recently, Kupavskii and Sokolov \cite{KuSo} completely resolved the conjecture for the case $n\leq 3(s+1)$. Here, we state a recent result due to Frankl \cite{Fr1}. Let $\binom{[n]}{r}-\binom{[s,n]}{r}$ be the $r$-graph on $[n]$ consisting of all hyperedges intersecting the set $[s-1]$.

\begin{thm}[Frankl \cite{Fr1}]\label{Frankl}
Let $r\geq1$, $s\geq2$ and $n\geq (2s-1)r-s+1$. Then $\ex_r(n,M_{s}^r)\leq \binom{n}{r} - \binom{n-s+1}{r}$. Equality holds only for families isomorphic to $\binom{[n]}{r}-\binom{[s,n]}{r}$.
\end{thm}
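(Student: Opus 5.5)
Since this theorem is cited from Frankl \cite{Fr1}, the natural plan is to reconstruct Frankl's shifting argument. In that argument the key idea is to bound separately the hyperedges meeting a small transversal-like set.

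\emph{Reduction by shifting.} Let $\cH\subseteq\binom{[n]}{r}$ have $\nu(\cH)\le s-1$, i.e.\ $\cH$ is $M_s^r$-free. Apply the standard shifts $S_{ij}$ for $i<j$. Shifting preserves $e(\cH)$ and does not increase the matching number. Hence we may assume that $\cH$ is shifted (a left-compressed family).

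\emph{Splitting the family.} For a shifted family with $\nu(\cH)\le s-1$, a classical consequence of shiftedness is the following: every $F\in\cH$ satisfies $|F\cap[rs-1]|\ge \ldots$. More usefully, Frankl's approach partitions $\cH$ by how hyperedges meet $[s-1]$. Write $\cH=\cH_0\cup\cH_1$, where $\cH_0=\{F\in\cH: F\cap[s-1]=\emptyset\}$. The part $\cH_1$ trivially satisfies $|\cH_1|\le\binom{n}{r}-\binom{n-s+1}{r}$. So it suffices to show that the hyperedges of $\cH_0$ can be injectively charged to sets missing from $\cH_1$, namely to $r$-sets meeting $[s-1]$ that are not in $\cH$.

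To build this injection, use the fact that the shifted family contains no $s$ pairwise disjoint edges. Frankl's trick is to consider the family $\cH(\bar{[s]})$ and the links of the first $s$ vertices. For each $i\le s$, let $\cH_i$ be the $(r-1)$-sets $G$ avoiding $[s]$ with $G\cup\{i\}\in\cH$, and let $\cH_{\emptyset}$ be the edges avoiding $[s]$. Shiftedness gives the chain $\cH_s\subseteq\cdots\subseteq\cH_1$. One then shows that the families $\cH_\emptyset$ and $\cH_s$ (suitably combined) are cross-dependent: there is no choice of pairwise disjoint members, one from each of $\partial\cH_\emptyset$, $\cH_s,\dots$, on $[s+1,n]$. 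Then one applies a Kruskal--Katona/shadow-type inequality on the ground set $[s+1,n]$ of size $m=n-s\ge (2s-1)(r-1)$. The inequality needed is of the form $|\cH_\emptyset|+|\cH_s|\le\binom{m}{r-1}$, possibly with extra terms absorbing edges meeting $[s]$ in at least two points. Summing these bounds over the levels yields $|\cH|\le\binom{n}{r}-\binom{n-s+1}{r}$.

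\textbf{Main obstacle.} The crux is the averaging/shadow inequality for the non-trivial part. One must show that edges avoiding $[s-1]$ are outnumbered by missing edges through $[s-1]$. For this I would try Frankl's averaging over random partitions of $[s+1,n]$ into $(r-1)$-blocks plus leftover: in each partition, count the edges of each type and use $\nu\le s-1$ to get a local inequality. The hypothesis $n\ge(2s-1)r-s+1$, equivalently $n-s\ge (2s-1)(r-1)$, is exactly what makes each random block configuration accommodate $2s-1$ disjoint $(r-1)$-sets. For uniqueness, trace equality through the averaging: equality forces $\cH_\emptyset=\emptyset$ and all links to be complete. One also has to check that equality is preserved when undoing the shifts, which is a standard but fiddly argument showing the original family was already a star-union on some $s-1$ vertices.
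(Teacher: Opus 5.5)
The paper does not prove this statement. It quotes it from Frankl~\cite{Fr1} and uses it as a black box, e.g.\ in (3.5). So your sketch has to stand on its own, and it has a genuine gap at its core. The shifting reduction is fine, and so is the reformulation: charge the edges avoiding $[s-1]$ to missing $r$-sets meeting $[s-1]$. But the inequality meant to carry out this charging, which contains the whole content of the theorem, is neither correctly formulated nor proved.

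\textbf{The normalization switch.} You change conventions midway. With $\nu(\cH)\le s-1$ the transversal is $[s-1]$, yet you then take $\cH_\emptyset$ avoiding $[s]$, the links of all of $1,\dots,s$, and ground set $[s+1,n]$; that is Frankl's convention $\nu\le s$. In that setting, $|\cH_\emptyset|+|\cH_s|\le\binom{n-s}{r-1}$ plus trivial bounds on the other levels only gives $|\cH|\le\binom{n}{r}-\binom{n-s}{r}$. This exceeds the target by $\binom{n-s}{r-1}$; you would need something like $|\cH_\emptyset|+\sum_{i=1}^{s}|\cH_i|\le(s-1)\binom{n-s}{r-1}$.

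\textbf{The single-link bound is false.} In the consistent normalization (transversal $[s-1]$, ground set $[s,n]$, $m=n-s+1$), the bound $|\cH_\emptyset|+|\cH_{s-1}|\le\binom{m}{r-1}$ would suffice, but it fails. Take $r=2$, $s=4$, $n=11=(2s-1)r-s+1$ and $\cH=\binom{[7]}{2}$, which is shifted, edge-maximal, and has $\nu(\cH)=3$. Then $\cH_\emptyset=\binom{[4,7]}{2}$ and $\cH_3$ consists of the four singletons in $[4,7]$, so the left side is $6+4=10>8$. More generally, $\binom{[2s-1]}{2}$ on $n$ vertices violates it whenever $n-s+1<\binom{s+1}{2}$. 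So no argument of this shape can give a threshold linear in $s$. In the example the theorem still holds ($21\le 27$) because of the missing edges $\{1,v\},\{2,v\}$ with $v\in[8,11]$: the slack sits in the other links.

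A correct argument must charge $\cH_\emptyset$ against the missing sets of all of $\cH_1,\dots,\cH_{s-1}$ simultaneously. It has to use the full cross-dependence (no pairwise disjoint $G\in\cH_\emptyset$, $H_1\in\cH_1,\dots,H_{s-1}\in\cH_{s-1}$), not just $\partial\cH_\emptyset\subseteq\cH_{s-1}$. This is where all the difficulty of Frankl's theorem lies, and your sketch does not supply the lemma that handles it.

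The remaining steps are also placeholders. The sentence about $|F\cap[rs-1]|$ is unfinished. The random-partition averaging never states the local inequality to be verified on a fixed partition. In the consistent normalization, the hypothesis says exactly that $[s,n]$ has room for one $r$-set and $2(s-1)$ disjoint $(r-1)$-sets, since $n-s+1\ge r+2(s-1)(r-1)$; your count of $2s-1$ blocks comes from the mismatched setup.

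Uniqueness needs an equality analysis of the missing key inequality in the shifted case, plus an unshifting argument. The latter is not automatic: the paper, for its own Theorem~\ref{thm1}, needs Wang and Peng's Lemma~\ref{thm1.2} to pass from shifted to arbitrary extremal families. As written, the proposal identifies the right reduction but not the key lemma, so it is an outline rather than a proof.
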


For $t\geq r$, let $K_t^{(r)}$ denote the complete $r$-graph on $t$ vertices. In 2020, Liu and Wang \cite{LiWa} determined the maximum number of copies of $K_t^{(r)}$ in an $n$-vertex $M^r_{s}$-free $r$-graph for all $t,r,s$ and sufficiently large $n$. It is natural to determine the maximum number of copies of $K_t^{(r)}$ in an $n$-vertex $\cF$-free $r$-graph, as well as the maximum number of other subhypergraphs in an $n$-vertex $M^r_{s}$-free $r$-graph. For the former problem, Zhou, Zhao and Yuan \cite{ZhZY} systematically studied the extremal problem of maximizing the number of copies of $K_t^{(r)}$ in an $n$-vertex expansion-free $r$-graph (for a graph $F$, the \textit{$r$-expansion} $F^r$ of $F$ is the $r$-graph obtained from $F$ by inserting $r-2$ new distinct vertices in each edge of $F$ \cite{Mu}), and also provided a brief survey of generalized Tur\'{a}n problems on hypergraphs.
For the latter problem, very recently Wang and Peng \cite{WaP} determined the maximum number of sunflowers in an $n$-vertex $r$-graph with bounded matching number. Before stating this result, we introduce several notations.

Let $\nu(\cH)$ denote the \textit{matching number} of an $r$-graph $\cH$, i.e., the maximum size of a matching in $\cH$. Given $r$-graphs $\cH$ and $\cG$, let $N(\cH,\cG)$ denote the number of copies of $\cH$ in $\cG$. Given integers $r>t\geq1$ and $k \ge 1$, let $S_{t,k}^r$ denote the $r$-graph with hyperedges $\{e_1, \dots, e_k\}$ such that there exists a $t$-set $T$ with $e_i \cap e_j = T$ for $1\le i < j \le k$. Note that $S_{t,k}^r$ is called an \textit{$r$-uniform sunflower} with $k$ petals and kernel of size $t$. Here, we refer to the set $T$ as the \emph{core} of $S_{t,k}^r$, and all other vertices are referred to as \emph{petal vertices}. 

\begin{thm}[Wang and Peng \cite{WaP}]
Let $s, r \geq 2$, $n \geq 2sr^3$ and $\mathcal{H} \subseteq \binom{[n]}{r}$ with $\nu(\mathcal{H})<s$. Then
\begin{eqnarray*}
N\left(S_{r-1,2}^r, \mathcal{H}\right) \leq N\left(S_{r-1,2}^r, \binom{[n]}{r}-\binom{[s,n]}{r}\right).
\end{eqnarray*}
Moreover, the equality holds if and only if $\mathcal{H} = \binom{[n]}{r}-\binom{[n] \setminus S}{r}$ for some $S \in \binom{[n]}{s-1}$.
\end{thm}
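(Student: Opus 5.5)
The count has a simple form: for $T\in\binom{[n]}{r-1}$ let $d_{\cH}(T)$ be the number of hyperedges of $\cH$ containing $T$. Then
\[
N\left(S_{r-1,2}^r,\cH\right)=\sum_{T\in\binom{[n]}{r-1}}\binom{d_{\cH}(T)}{2}.
\]
Each copy of $S_{r-1,2}^r$ is an unordered pair of hyperedges meeting in exactly $r-1$ vertices, and that intersection is the core $T$. Since the binomial $\binom{x}{2}$ is increasing, it suffices to show that some extremal $\cH$ may be assumed shifted, and then to compare degree sequences.

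\emph{Step 1 (shifting).} For $i<j$, apply the usual shift $S_{ij}$, which replaces $j$ by $i$ in an edge whenever the resulting set is not already present. It is classical that $\nu(S_{ij}(\cH))\le\nu(\cH)$. I would show $N(S_{r-1,2}^r,S_{ij}(\cH))\ge N(S_{r-1,2}^r,\cH)$ by constructing an injection from pairs of edges sharing $r-1$ vertices in $\cH$ to such pairs in $S_{ij}(\cH)$.

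The natural construction groups the $(r-1)$-sets $T$ into orbits $\{T,T'\}$, where $T'=(T\setminus\{j\})\cup\{i\}$ with $j\in T$ and $i\notin T$. It then suffices to show
\[
\binom{d'(T)}{2}+\binom{d'(T')}{2}\ge\binom{d(T)}{2}+\binom{d(T')}{2},
\]
where $d'$ denotes degrees in $S_{ij}(\cH)$, plus a check that sets fixed by the shift keep their degree. Consider the edges $T\cup\{x\}$ and $T'\cup\{x\}$ with $x\ne i,j$: shifting only moves edges from the $T$-side to the $T'$-side, so for these edges the pair of degrees becomes more unbalanced while its sum is preserved, and convexity gives the inequality. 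The edges $T\cup\{i\}=T'\cup\{j\}$ are common to both sides and are unchanged. Shifts may also change degrees of sets containing neither $i$ nor $j$, but those degrees are preserved, since such an edge $E$ maps to $E'$ containing the same $T$ only if $T\subseteq E\cap E'$.

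Iterating these shifts, we may assume $\cH$ is shifted and still satisfies $\nu(\cH)<s$. This bookkeeping is the main obstacle: one must verify that no $(r-1)$-set other than these paired ones loses degree.

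\emph{Step 2 (the shifted case, large $n$).} For shifted $\cH$ with $\nu<s$ and $n\ge 2sr^3$, I would follow Frankl's approach. Every edge must meet $[s-1]$, or else a known structural dichotomy holds: either $\cH\subseteq\{E:E\cap[s-1]\ne\emptyset\}$, or $\cH$ has few edges, of order $O(n^{r-1})$ times a small factor.

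\begin{itemize}
\item In the first case, the degrees $d_{\cH}(T)\le d_{\cL}(T)$ pointwise, where $\cL=\binom{[n]}{r}-\binom{[s,n]}{r}$, so the inequality follows immediately.
\item In the second case, use Theorem \ref{Frankl}-type counting. Sets $T$ disjoint from $[s-1]$ can have degree at most about $rs$ in $\cH$, since otherwise a matching of size $s$ can be built greedily, while $\cL$ has $\binom{n-s+1}{r-1}$ such $T$ each of degree $s-1$. A crude bound of the form $\sum_T\binom{d(T)}{2}\le\binom{n}{r-1}\binom{rs}{2}$ is then compared with the $\Theta(n^{r})$ total of $\cL$, which comes from sets $T$ meeting $[s-1]$ with degree about $n$. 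The bound $n\ge 2sr^3$ makes this comparison strict.
\end{itemize}

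\emph{Step 3 (uniqueness).} Equality forces equality in every shift step and in Step 2. This puts $\cH$ inside $\{E:E\cap S\ne\emptyset\}$ for some $(s-1)$-set $S$, with all degrees matching, hence equality. Undoing the shifts requires arguing that a shift which creates an edge increasing some $\binom{d}{2}$ must be trivial; I would handle this with the standard argument that a hypergraph whose shift is a star-type family $\binom{[n]}{r}-\binom{[s,n]}{r}$ with the same size is itself isomorphic to one.
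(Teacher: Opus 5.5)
Your Step~1 is correct. It is in fact simpler than the injection in Lemma~\ref{newlem}. For $T=A\cup\{j\}$ and $T'=A\cup\{i\}$, let $a_x,b_x,c\in\{0,1\}$ record whether $A\cup\{j,x\}$, $A\cup\{i,x\}$ and $A\cup\{i,j\}$ lie in $\cH$. Then the shifted degrees are $c+\sum_x a_xb_x$ and $c+\sum_x\max(a_x,b_x)$, so the new pair majorizes the old one. That, not merely ``edges move to the $T'$-side'', is why the pair becomes more unbalanced. Convexity of $\binom{x}{k}$ then gives monotonicity for every $k$ at once.

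In Step~3, the fact you quote needs the hypothesis $\nu(\cH)<s$; as stated it is false, since $S_{12}$ maps the path $\{\{1,2\},\{1,3\},\{2,4\}\}$ onto a star. The hypothesis is available, because every intermediate family in the shifting sequence has $\nu<s$.

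The genuine gap is the second case of Step~2. First, your degree bound is false for $r\ge 3$. Let $\cH_0=\{E:E\cap[s-2]\neq\emptyset\}\cup\{E\subseteq[s-1,n]:|E\cap\{s-1,s,s+1\}|\ge 2\}$. It is shifted and has $\nu(\cH_0)\le s-1$, since edges of the second part pairwise intersect. Yet every $(r-1)$-set containing $\{s,s+1\}$ and avoiding $[s-1]$ has degree $n-r+1$.

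More seriously, the crude bound $\binom{n}{r-1}\binom{rs}{2}$ silently drops the $(r-1)$-sets meeting $[s-1]$, which is where all the mass lies. $\cH_0$ is not contained in $\{E:E\cap[s-1]\neq\emptyset\}$, but for $s\ge 3$ it has about $(s-2)\binom{n}{r-2}\binom{n}{2}$ copies of $S^r_{r-1,2}$. Compare this with about $(s-1)\binom{n}{r-2}\binom{n}{2}$ for $\cH^*=\binom{[n]}{r}-\binom{[s,n]}{r}$. So competitors outside the star family differ from $\cH^*$ only in the leading constant, and no order-of-magnitude comparison can separate them. The alternative ``$O(n^{r-1})$ edges times a small factor'' cannot help either: $\cH^*$ itself has $\Theta(n^{r-1})$ edges and $\cH_0$ has about $(s-2)\binom{n}{r-1}$. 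Nothing in the proposal supports the threshold $2sr^3$ either; the paper's own argument only yields an unspecified $n_0$.

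What is missing is a reduction in $s$. In the paper (Theorem~\ref{thm1.1} with $k=2$), one takes a shifted, edge-maximal extremal $\cH$ and shows that vertex $1$ has full link. If $\{1,(s-1)r+2,\dots,sr\}\notin\cH$, shiftedness forces every edge to meet $[(s-1)r+1]$ in at least two vertices, leaving only $O(n^{r-1})$ copies. This is the one place where an order-of-magnitude comparison is legitimate. Edge-maximality, shiftedness and this edge then give $\{1,n-r+2,\dots,n\}\in\cH$. Hence the link of $1$ is $\binom{[2,n]}{r-1}$ and $\nu(\cH-\{1\})<s-1$.

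Now split copies by the role of vertex $1$ (Lemma~\ref{lem3.1}(ii)). The core part equals $\binom{n-1}{r-2}\binom{n-r+1}{2}$. The petal part is at most $r\,e(\cH-\{1\})$, which Theorem~\ref{Frankl} controls. Comparing with the same, exact, decomposition of $\cH^*$ gives $N(S^r_{r-1,2},\cH-\{1\})\ge N(S^r_{r-1,2},\cH^*-\{1\})$. Induction on $s$ applied to $\cH-\{1\}$ then yields $\cH=\cH^*$.
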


Motivated by the above work, we consider the problem of maximizing the number of copies of the sunflower $S_{r-1,k}^r$ in an $n$-vertex $r$-graph with bounded matching number. We determine the maximum number in $M^r_{s}$-free $r$-graphs on $n$ vertices, which generalizes the above lemma to arbitrary $k\ge 1$. More precisely, we show that the hypergraph $\binom{[n]}{r}-\binom{[s,n]}{r}$ is optimal for all $k\geq1$ and sufficiently large $n$, and characterize all extremal hypergraphs. An interesting phenomenon is that the extremal numbers and extremal hypergraphs are exactly the same for all $k\ge 1$.

\begin{thm}\label{thm1}
Let integers $k,s\geq 1$ and $r\geq2$. Suppose $\cH$ is an $r$-graph on $[n]$ with $\nu(\cH)<s$. Then there exists $n_0(k,s,r)$ such that for all $n >n_0(k,s,r)$,
\begin{eqnarray*}
N(S_{r-1,k}^r, \mathcal{H}) \leq N\left(S_{r-1,k}^r, \binom{[n]}{r} - \binom{[s,n]}{r}\right).
\end{eqnarray*}
Moreover, the equality holds if and only if $\cH$ is the $r$-graph consisting of all hyperedges intersecting a fixed $(s-1)$-set.
\end{thm}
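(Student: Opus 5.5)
We count copies of $S_{r-1,k}^r$ through $(r-1)$-sets. A copy is determined by its core $T$ and an unordered $k$-set of hyperedges containing $T$, so
\[
N(S_{r-1,k}^r,\cH)=\sum_{T\in\binom{[n]}{r-1}}\binom{d_\cH(T)}{k}.
\]
(For $k=1$ each hyperedge is counted $r$ times, which only changes a constant factor and does not affect the comparison.) The plan is to show that shifting does not decrease this quantity, and then to count directly in shifted families.

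\textbf{Step 1: shifting is monotone.} Let $S_{ij}$ ($i<j$) be the standard shift, which replaces $j$ by $i$ whenever the result is not already in $\cH$. It is classical that $\nu(S_{ij}(\cH))\le\nu(\cH)$. I will show $N(S_{r-1,k}^r,S_{ij}(\cH))\ge N(S_{r-1,k}^r,\cH)$ by building an injection from copies in $\cH$ to copies in $S_{ij}(\cH)$.

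\begin{itemize}
\item If the core $T$ avoids $\{i,j\}$, the degree of $T$ is preserved: a petal vertex $j$ becomes $i$ exactly when $i$ was not already a petal vertex.
\item If $T$ contains both $i$ and $j$, the degree of $T$ is also preserved.
\item The remaining case is a pair of cores $T\ni j$, $T'=T-j+i$ with $i\notin T$. For $x\notin T\cup T'$, a set $T\cup\{x\}$ either survives or is shifted to $T'\cup\{x\}$. Also $T\cup\{i\}=T'\cup\{j\}$ is a hyperedge of the family before the shift if and only if it is one after. So after the shift the neighbourhoods become $A\cap B$ for $T$ and $A\cup B$ for $T'$, where $A,B$ are the old petal sets of $T,T'$, apart from the common vertex $\{i,j\}$.
\end{itemize}

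Since $\binom{a}{k}+\binom{b}{k}\le\binom{|A\cup B|}{k}+\binom{|A\cap B|}{k}$, the total does not decrease; this follows from convexity, since $|A\cup B|+|A\cap B|=a+b$ and the pair $(|A\cup B|,|A\cap B|)$ is more spread out. Equivalently, the injection sends a $k$-subset of $A$ to itself inside $A\cup B$ unless it is already used, and in that case into $A\cap B$. Iterating, we may assume $\cH$ is shifted and has $\nu(\cH)<s$.

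\textbf{Step 2: counting in shifted families.} Because $\cH$ is shifted with $\nu<s$, every hyperedge meets $[sr-1]$; in fact every $e\in\cH$ satisfies $|e\cap[\ell r-1]|\ge\ell$ for some $\ell\le s$, by the Frankl-type characterization. The dominant contribution to $\sum_T\binom{d(T)}{k}$ comes from cores $T$ with $d(T)=\Theta(n)$, that is, cores $T$ such that $T\cup\{x\}\in\cH$ for linearly many $x$. For such a $T$, shiftedness implies $T\cup\{x\}\in\cH$ for all large $x$. Let $\mathcal T$ be the family of these "free" $(r-1)$-sets. Then $\mathcal T$ is shifted and $\nu(\mathcal T)<s$, since one can greedily extend a matching of $\mathcal T$ with distinct large vertices. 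Hence
\[
N(S_{r-1,k}^r,\cH)\le |\mathcal T|\binom{n}{k}+O(n^{r-2}\cdot n^{k-1}).
\]
For the extremal family, $|\mathcal T|=\binom{n}{r-1}-\binom{n-s+1}{r-1}$. By Theorem~\ref{Frankl} applied to $(r-1)$-graphs (valid for large $n$), $|\mathcal T|$ is at most this value, with equality only for the star family. If $|\mathcal T|$ is smaller, it is smaller by $\Omega(n^{r-2})$ by the stability/equality part of Theorem~\ref{Frankl} in the shifted setting. This loses $\Omega(n^{r-2}\cdot n^k)$, which beats the error term. When $r=2$, $\mathcal T$ consists of vertices, and a shifted $\cH$ with $\nu<s$ has at most $s-1$ vertices of linear degree.

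\textbf{Step 3: equality and the main obstacle.} If $\mathcal T$ equals the star family on $[s-1]$, then every hyperedge meeting $[s-1]$ is present. A hyperedge avoiding $[s-1]$ would create an $s$-matching together with $s-1$ disjoint hyperedges through $1,\dots,s-1$ for large $n$. Hence $\cH$ is exactly the extremal family. To transfer equality back from the shifted family to the original $\cH$, I will argue as usual: if $S_{ij}(\cH)$ is the extremal family but $\cH$ is not isomorphic to it, then strictness occurs in the inequality of Step 1, or $\nu(\cH)\ge s$. I expect the main obstacle to be Step 1's injection in the mixed-core case, in particular the careful bookkeeping of the hyperedge $T\cup\{i\}$ and of the petal vertex equal to $i$ or $j$. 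The equality transfer in Step 3 is the secondary difficulty.
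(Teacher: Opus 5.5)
Your Step~1 is correct, and it takes a different and cleaner route than the paper's case-by-case injection in Lemma~\ref{newlem}. The degree $d(T)$ is unchanged unless $T$ contains exactly one of $i,j$. For the pair $T_0\cup\{j\}$, $T_0\cup\{i\}$, the degrees $(|A|+\epsilon,|B|+\epsilon)$ become $(|A\cap B|+\epsilon,|A\cup B|+\epsilon)$, where $\epsilon=1$ iff $T_0\cup\{i,j\}\in\cH$, so convexity of $x\mapsto\binom{x}{k}$ finishes.

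The genuine gap is in Step~2. You claim that if $\mathcal T$ is not the star, its deficit is $\Omega(n^{r-2})$. This is false, even among shifted families. Take $s=2$ and $r\ge3$. The family of all $(r-1)$-sets containing $1$ except $\{1\}\cup[n-r+3,n]$ is shifted and intersecting, and its deficit is exactly $1$. It also arises from an actual $\cH$: start from the star at $1$ and delete the edges $\{1,x\}\cup[n-r+3,n]$ for $2r\le x\le n-r+2$. The result is a shifted intersecting $r$-graph, and its cores of degree at least $2r=sr$ form exactly this family.

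The same example refutes your claim that a core of linear degree satisfies $T\cup\{x\}\in\cH$ for all large $x$. Shifting makes each neighbourhood an initial segment, so the large $x$ are the first to be lost, and without edge-maximality degrees such as $n-r$ occur.

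The uniqueness part of Theorem~\ref{Frankl} only gives a deficit of at least $1$. That costs about $\binom{n}{k}$ copies, which is not of larger order than your $O(n^{r+k-3})$ error once $r\ge3$. So your inequality is inconclusive exactly in the near-extremal regime, and Step~3 only treats the case where $\mathcal T$ is exactly the full star.

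To repair your route, first define $\mathcal T$ by a threshold such as $d(T)\ge sr$. Then $\nu(\mathcal T)<s$ still holds, and the other cores contribute only $O(n^{r-1})$. Second, you need a genuine Hilton--Milner-type stability theorem for matchings, which is known for large $n$: if $\nu(\mathcal T)<s$ and $\mathcal T$ has no vertex cover of size $s-1$, the deficit is $\Omega(n^{r-2})$.

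Third, the leftover case needs a structural argument. There $\mathcal T$ lies in the star of some $S=\{v_1,\dots,v_{s-1}\}$ and misses only $o(n^{r-2})$ of its members. Given a hyperedge $e$ avoiding $S$, greedily pick pairwise disjoint $T_\ell\in\mathcal T$ with $T_\ell\cap S=\{v_\ell\}$ and $T_\ell\cap e=\emptyset$, and extend each one using $d(T_\ell)\ge sr$. This gives an $s$-matching, so $\cH\subseteq\binom{[n]}{r}-\binom{[n]\setminus S}{r}$, and strict monotonicity of the count under adding edges finishes. This version would not even need shifting.

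The paper avoids stability altogether. It assumes $\cH$ is shifted and edge-maximal. A crude count shows $\{1,(s-1)r+2,\dots,sr\}\in\cH$; otherwise every edge meets $[(s-1)r+1]$ in two vertices, leaving only $O(n^{r+k-3})$ copies. Maximality then forces $\cL_1(\cH)=\binom{[2,n]}{r-1}$, hence $\nu(\cH-\{1\})<s-1$. The exact comparison is done by induction on $s$ through the vertex-$1$ decomposition of Lemma~\ref{lem3.1}, using Theorem~\ref{Frankl} for the $k=2$ petal term and induction for $k\ge3$.

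Finally, your equality transfer in Step~3 is exactly the Wang--Peng result that the paper cites as Lemma~\ref{thm1.2}. Your proposal only announces it, so it must be quoted or proved.
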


The proof of Theorem~\ref{thm1} will be presented in Section \ref{sec:proofs}. Our main tool is the shifting method, which is introduced in Section \ref{sec:shifting}. Compared with the special case $k=2$ established by Wang and Peng \cite{WaP}, the main difficulty lies in showing that the shifting operation does not decrease the number of copies of $S_{r-1,k}^r$ in any $r$-graph. To resolve this difficulty, we construct an injection via a detailed structural analysis of subhypergraphs (see Lemma \ref{newlem}), thereby establishing the desired monotonicity property. Furthermore, we present a counting method for estimating the number of copies of $S_{r-1,k}^r$ in arbitrary $r$-graphs (see Lemma \ref{lem3.1}). 

We remark that counting the number of copies of $S_{r-1,k}^r$ in $r$-graphs is also related to Tur\'{a}n problems in the $(r-1,k)$-norm, which was introduced by Chen et al. \cite{Chen}. Given integers $r > t \ge 1$ and a real number $k > 0$, the \textit{$(t,k)$-norm} $\|\mathcal{H}\|_{t,k}$ of an $r$-graph $\mathcal{H}$ is the sum of the $k$-th power of the degrees $d_{\mathcal{H}}(T)$ (i.e., the number of hyperedges containing $T$) over all $t$-subsets $T \subset V(\mathcal{H})$. Note that $(r-1,2)$-norm is also called the \textit{$\ell_2$-norm} in the literature. In \cite{Chen}, Chen et al. systematically investigate non-degenerate Tur\'{a}n problems in the $(r-1,k)$-norm.

For non-negative integers $a$ and $b$, let $\binom{a}{b}$ denote the binomial coefficient. In particular, we define $\binom{a}{b}=0$ if $a<b$, and $\binom{0}{0}=1$. By the \emph{Newton expansion of powers} \cite{GrKP}, we have
\begin{eqnarray*}
(d_{\mathcal{H}}(T))^k=\sum_{m=1}^k m! \, S(k,m) \binom{d_{\mathcal{H}}(T)}{m},
\end{eqnarray*}
where $S(k,m)$ denotes the \textit{Stirling numbers of the second kind}.
For integers $k\ge m\ge 1$, $S(k,m)$ is a positive integer counting the number of ways to partition a set of $k$ elements into $m$ non-empty disjoint subsets, with explicit formula
\begin{eqnarray*}
S(k,m)=\frac{1}{m!}\sum_{i=0}^m (-1)^{m-i}\binom{m}{i}i^k.
\end{eqnarray*}
Moreover, we have $S(k,1)=1$ and $S(k,k)=1$. 


Observe that $\sum_{S\in \binom{V(\cH)}{r-1}}\binom{d_{\mathcal{H}}(S)}{k}=N(S_{r-1,k}^r,\cH)$ for $k\geq2$ and $\sum_{S\in \binom{V(\cH)}{r-1}}\binom{d_{\mathcal{H}}(S)}{1}=r\cdot e(\cH)$.
Thus, for all $k\ge 2$, we have
\begin{eqnarray}
\|\mathcal{H}\|_{r-1,k}&=& \sum_{S\in \binom{V(\cH)}{r-1}}(d_{\mathcal{H}}(S))^k \notag\\
&=& \sum_{S\in \binom{V(\cH)}{r-1}}\left(\sum_{m=1}^k m! \, S(k,m) \binom{d_{\mathcal{H}}(S)}{m}\right) \notag\\
&=& \sum_{S\in \binom{V(\cH)}{r-1}}\left(S(k,1)\binom{d_{\mathcal{H}}(S)}{1}+ 2S(k,2)\binom{d_{\mathcal{H}}(S)}{2}+\cdots+ k!S(k,k)\binom{d_{\mathcal{H}}(S)}{k}\right) \notag\\
&=& \left(S(k,1)\sum_{S\in \binom{V(\cH)}{r-1}}\binom{d_{\mathcal{H}}(S)}{1}+ 2S(k,2)\sum_{S\in \binom{V(\cH)}{r-1}}\binom{d_{\mathcal{H}}(S)}{2}+\cdots+ k!S(k,k)\sum_{S\in \binom{V(\cH)}{r-1}}\binom{d_{\mathcal{H}}(S)}{k}\right) \notag\\
&=& r\cdot e(\cH)+ \sum_{i=2}^{k-1}\left(i!S(k,i)\cdot N(S_{r-1,i}^r,\cH)\right)+k!\cdot N(S_{r-1,k}^r,\cH).
\end{eqnarray}
This implies that determining the $(r-1,k)$-norm $\|\mathcal{H}\|_{r-1,k}$ of an $r$-graph $\mathcal{H}$ is equivalent to counting copies of $S_{r-1,i}^r$ in $\mathcal{H}$ for every $1\le i\leq k$.

Let $\ex_{t,k}(n,\mathcal{F})$ denote the maximum of $\|\mathcal{H}\|_{t,k}$ over all $n$-vertex $\cF$-free $r$-graphs $\cH$. This notation is an extension of the Tur\'an number $\ex_r(n,\cF)$, since $\ex_r(n,\cF)=\ex_{t,1}(n,\cF)/\binom{r}{t}$. Recall that Theorem~\ref{thm1} shows that the extremal numbers and extremal hypergraphs of $S_{r-1,i}^r$ are exactly the same for all $2\leq i\leq k$. Thus, combining (1.1), Theorems \ref{Frankl} and \ref{thm1}, we obtain the following result, which confirms that Erd\H{o}s Matching Conjecture in the $(r-1,k)$-norm holds.

\begin{thm}[\bf Erd\H{o}s Matching Conjecture in $(r-1,k)$-norm]\label{thm2}
Let integers $s\geq 1$ and $k,r\geq2$. Then there exists $n_0(k,s,r)$ such that for all $n >n_0(k,s,r)$,
\begin{eqnarray*}
\ex_{r-1,k}(n,M_s^r)\leq \left\|\binom{[n]}{r} - \binom{[s,n]}{r}\right\|_{r-1,k}.
\end{eqnarray*}
Moreover, the equality holds if and only if $\cH$ is the $r$-graph consisting of all hyperedges intersecting a fixed $(s-1)$-set.
\end{thm}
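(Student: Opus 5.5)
The plan is to decompose the norm via identity (1.1) and bound each term separately by results already available. Fix $\cH$ on $[n]$ with $\nu(\cH)<s$, and let $\cH_0=\binom{[n]}{r}-\binom{[s,n]}{r}$. By (1.1),
\begin{eqnarray*}
\|\cH\|_{r-1,k}= r\cdot e(\cH)+\sum_{i=2}^{k} i!\,S(k,i)\, N(S_{r-1,i}^r,\cH),
\end{eqnarray*}
and the same expansion holds for $\cH_0$. All coefficients $r$ and $i!S(k,i)$, $2\le i\le k$, are positive integers. It therefore suffices to compare the two hypergraphs term by term.

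For the linear term, I will invoke Theorem~\ref{Frankl}. Once $n\ge (2s-1)r-s+1$, it gives $e(\cH)\le e(\cH_0)$. For $s=1$ the statement is trivial, since $\cH$ is empty and $\cH_0$ is empty as well. For each $2\le i\le k$, Theorem~\ref{thm1} with $k$ replaced by $i$ gives $N(S_{r-1,i}^r,\cH)\le N(S_{r-1,i}^r,\cH_0)$ for $n>n_0(i,s,r)$. Now set
\begin{eqnarray*}
n_0(k,s,r):=\max\Bigl\{(2s-1)r-s+1,\ \max_{2\le i\le k} n_0(i,s,r)\Bigr\},
\end{eqnarray*}
where the inner $n_0$ are those from Theorem~\ref{thm1}. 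With this choice, all the inequalities hold simultaneously. Summing them with the positive weights gives $\|\cH\|_{r-1,k}\le\|\cH_0\|_{r-1,k}$, and taking the maximum over $\cH$ bounds $\ex_{r-1,k}(n,M_s^r)$.

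For the equality case, suppose $\|\cH\|_{r-1,k}=\|\cH_0\|_{r-1,k}$. Since each term satisfies its inequality and each weight is strictly positive, every term must be tight. In particular, the top term with $i=k$ is tight, because $k!S(k,k)=k!>0$ and $k\ge2$. So $N(S_{r-1,k}^r,\cH)=N(S_{r-1,k}^r,\cH_0)$, and the equality part of Theorem~\ref{thm1} forces $\cH$ to consist of all hyperedges meeting a fixed $(s-1)$-set. Alternatively, one can use tightness of $e(\cH)$ together with Theorem~\ref{Frankl}. Conversely, any such $\cH$ is isomorphic to $\cH_0$, has $\nu(\cH)=s-1<s$, and attains the bound, so the characterization is complete.

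There is no real obstacle here; the substance lies in Theorem~\ref{thm1}. The only points needing care are the following:
\begin{itemize}
\item ensuring that a single threshold $n_0$ works uniformly, which is immediate since only finitely many $i\le k$ are involved;
\item using the fact that the extremal hypergraph is the same for every $i$, which is what allows all the term-wise maxima to be achieved simultaneously by $\cH_0$;
\item the degenerate case $s=1$.
\end{itemize}
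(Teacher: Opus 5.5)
Your proposal is correct and is essentially the paper's argument: expand the norm via (1.1), bound the edge term by Theorem~\ref{Frankl} and each $N(S_{r-1,i}^r,\cH)$, $2\le i\le k$, by Theorem~\ref{thm1}, using that $\binom{[n]}{r}-\binom{[s,n]}{r}$ is extremal for every $i$ simultaneously. Your uniform threshold and your equality argument, via tightness of the top term $k!\,N(S_{r-1,k}^r,\cH)$, spell out what the paper leaves to the single sentence preceding the theorem.
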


The special case $k=2$ of Theorem \ref{thm2} was previously established by Brooks and Linz \cite{BrL}. It is worth noting that when $s=2$, Theorem \ref{thm2} yields a version of the Erd\H{o}s--Ko--Rado theorem in the $(r-1,k)$-norm. A family $\cF$ is called \textit{$t$-intersecting} if $|A\cap B| \geq t$ for all $A, B \in \mathcal{F}$. When $t=1$, we say the family is \textit{intersecting}. Recall that the classic Erd\H{o}s--Ko--Rado theorem states that if $\cF\subseteq \binom{[n]}{r}$ is an intersecting family with $n>2r$, then $|\mathcal{F}| \leq \binom{n-1}{r-1}$, with equality only for families isomorphic to $\binom{[n]}{r}-\binom{[2,n]}{r}$.

\begin{thm}[\bf Erd\H{o}s--Ko--Rado Theorem in $(r-1,k)$-norm]\label{cor1}
Let $k,r\geq2$ be integers and $\mathcal{F} \subseteq \binom{[n]}{r}$ be an intersecting family. Then there exists $n_0(k,r)$ such that for all $n >n_0(k,r)$,
\begin{eqnarray*}
\|\mathcal{F}\|_{r-1,k}\leq \binom{n-1}{r-1}\left(1+(r-1)(n-r+1)^{k-1}\right).
\end{eqnarray*}
Moreover, the equality holds only for families isomorphic to $\binom{[n]}{r}-\binom{[2,n]}{r}$.
\end{thm}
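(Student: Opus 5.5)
Theorem~\ref{cor1} is the case $s=2$ of Theorem~\ref{thm2}, so the plan is to apply that result directly. An intersecting family $\mathcal{F}\subseteq\binom{[n]}{r}$ is exactly an $r$-graph on $[n]$ with $\nu(\mathcal{F})<2$, that is, an $M_2^r$-free $r$-graph. Taking $s=2$ and $n_0(k,r):=n_0(k,2,r)$ from Theorem~\ref{thm2}, for all $n>n_0(k,r)$ we get
\[
\|\mathcal{F}\|_{r-1,k}\le \left\|\binom{[n]}{r}-\binom{[2,n]}{r}\right\|_{r-1,k},
\]
and equality holds only when $\mathcal{F}$ consists of all $r$-sets meeting a fixed $1$-set. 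Such a family is a full star, which is isomorphic to $\binom{[n]}{r}-\binom{[2,n]}{r}$. This settles the uniqueness part.

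What remains is to compute the norm of the star $\mathcal{S}=\binom{[n]}{r}-\binom{[2,n]}{r}$, the family of all $r$-sets containing the vertex $1$. I would sort the $(r-1)$-sets $T\subseteq[n]$ by whether they contain $1$.
\begin{itemize}
\item If $1\notin T$, then $T\cup\{1\}$ is the only edge of $\mathcal{S}$ containing $T$, so $d_{\mathcal{S}}(T)=1$. There are $\binom{n-1}{r-1}$ such sets, and together they contribute $\binom{n-1}{r-1}$.
\item If $1\in T$, then every edge $T\cup\{x\}$ with $x\notin T$ lies in $\mathcal{S}$, so $d_{\mathcal{S}}(T)=n-r+1$. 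There are $\binom{n-1}{r-2}$ such sets, contributing $\binom{n-1}{r-2}(n-r+1)^k$.
\end{itemize}

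To put the second term in the stated form, use
\[
\binom{n-1}{r-2}(n-r+1)=\binom{n-1}{r-1}(r-1).
\]
This turns the total into
\[
\binom{n-1}{r-1}+(r-1)\binom{n-1}{r-1}(n-r+1)^{k-1}=\binom{n-1}{r-1}\left(1+(r-1)(n-r+1)^{k-1}\right),
\]
which is the claimed bound.

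All the real difficulty sits in Theorem~\ref{thm2}, which we are taking as given. The only points that need care here are two. First, the identification of intersecting families with $\nu<2$ must respect the hypothesis $s\ge 1$ of Theorem~\ref{thm2}; here $s=2$, so it does. Second, the binomial identity used in the last step, which follows from expanding both sides as factorials.
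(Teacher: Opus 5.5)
Your proposal is correct and follows the paper's route: the paper obtains this theorem as the case $s=2$ of Theorem~\ref{thm2}, using that intersecting families are exactly the $r$-graphs with $\nu(\mathcal{F})<2$. Your evaluation of $\bigl\|\binom{[n]}{r}-\binom{[2,n]}{r}\bigr\|_{r-1,k}$ is also right: the two degree classes are $1$ and $n-r+1$, and the identity $\binom{n-1}{r-2}(n-r+1)=(r-1)\binom{n-1}{r-1}$ turns the sum into the stated bound, a computation the paper leaves implicit.
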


\section{\normalsize The shifting operation}\label{sec:shifting}
In this section, we introduce the shifting operation, and establish several fundamental properties of this operation. The shifting method was originally introduced by Erd\H{o}s--Ko--Rado \cite{EKR} and further developed by Frankl \cite{Fr111}.

\subsection{\normalsize Definitions and basic properties}\label{sec:2.1}
Let $\mathcal{H}$ be an $r$-graph on the vertex set $[n]$. For integers $i,j$ with $1 \leq i < j \leq n$ and any $e \in E(\mathcal{H})$, the \textit{shifting operator} $S_{ij}$ is defined by
\begin{eqnarray*}
S_{ij}(e) =
\begin{cases}
(e \setminus \{j\}) \cup \{i\}, & \text{if } j \in e, i \notin e \text{ and } (e \setminus \{j\}) \cup \{i\} \notin E(\mathcal{H}); \\[6pt]
e, & \text{otherwise}.
\end{cases}
\end{eqnarray*}
Set
\begin{eqnarray*}
S_{ij}(\mathcal{H}) = \{ S_{ij}(e) : e \in E(\mathcal{H})\}.
\end{eqnarray*}

Clearly, $S_{ij}$ is a bijection from $\mathcal{H}$ to $S_{ij}(\mathcal{H})$. An $r$-graph $\mathcal{H}$ is called \textit{shifted} (or \textit{stable}) if $S_{ij}(\mathcal{H}) = \mathcal{H}$ for all $1 \leq i < j \leq n$. Frankl \cite{Fr111} showed that any $r$-graph $\mathcal{H}$ can be transformed into a shifted $r$-graph by applying the shifting operation iteratively, and that the shifting operation does not increase the matching number of an $r$-graph.

\begin{lem}[Frankl \cite{Fr111}]\label{lem2.1}
Let $s \geq 2$ be an integer. Let $\mathcal{H}$ be an $r$-uniform graph on $[n]$ with $\nu(\mathcal{H}) < s$. Let $1 \leq i < j \leq n$ be a pair of vertices, then $\nu(S_{ij}(\mathcal{H})) < s$.
\end{lem}

\begin{figure}[h]
  \centering
  \includegraphics[scale=0.6]{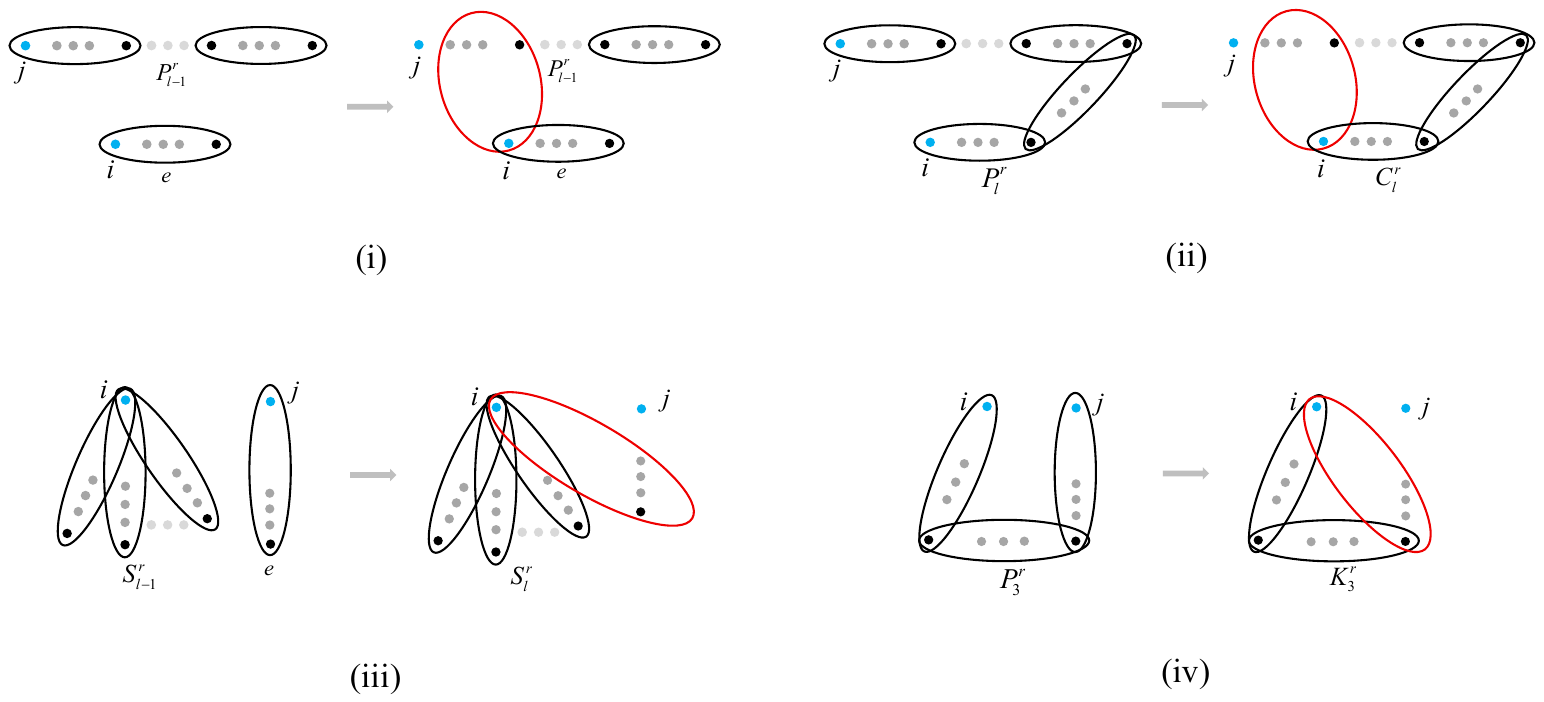}
  \caption{\small{Shifting operations for several expansions}}\label{fig1}
\end{figure}

The above lemma implies that the shifting operation preserves the property that the matching number is less than $s$, i.e., $S_{ij}(\mathcal{H})$ is still $M^r_s$-free. Note that $M_s^r$ is the $r$-uniform expansion of $M_s$. However, such freeness properties are not preserved under shifting in general. Let $P_\ell$, $C_\ell$ and $S_\ell$ denote a path, a cycle and a star with $\ell$ edges, respectively. Denote by $K_\ell$ the complete graph on $\ell$ vertices. The examples in Figure \ref{fig1} demonstrate that the shifting operation does not preserve $P^r_\ell$-freeness (see {(i)}), $C^r_\ell$-freeness (see {(ii)}), $S^r_\ell$-freeness (see {(iii)}), or $K^r_3$-freeness (see {(iv)}).

\subsection{\normalsize Counting subhypergraphs under shifting}\label{sec:2.2}
In this subsection, we investigate how the shifting operation affects the number of copies of subhypergraphs in an $r$-graph. This problem is crucial, as it determines whether we may first apply the shifting operation to $\cH$ when counting copies of subhypergraphs in $\mathcal{H}$. Liu and Wang \cite{LiWa} showed that the shifting operation does not decrease the number of copies of $K_s^{(r)}$ in any $r$-graph.

\begin{lem}[Liu and Wang \cite{LiWa}]\label{lem2.2}
Let $\mathcal{H}$ be an $r$-graph on $[n]$. For any $i,j \in [n]$ with $i < j$ and $s \geq r$, we have $N(K_s^{(r)},S_{ij}(\mathcal{H}))\geq N(K_s^{(r)},\cH)$.
\end{lem}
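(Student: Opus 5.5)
We prove that $N(K_s^{(r)},S_{ij}(\mathcal{H}))\ge N(K_s^{(r)},\mathcal{H})$ by an explicit injection $\phi$ from the $s$-subsets $U\subseteq[n]$ with $\binom{U}{r}\subseteq\mathcal{H}$ (the copies of $K_s^{(r)}$ in $\mathcal{H}$) to the $s$-subsets spanning a clique in $\mathcal{H}':=S_{ij}(\mathcal{H})$. The natural candidate is $\phi(U)=U$ if $U$ still spans a clique in $\mathcal{H}'$, and $\phi(U)=(U\setminus\{j\})\cup\{i\}$ otherwise.

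First we identify when $U$ fails to survive. Only hyperedges containing $j$ but not $i$ move under $S_{ij}$, so if $U$ contains both $i$ and $j$, or neither, every $e\in\binom{U}{r}$ is either fixed or has its image $(e\setminus\{j\})\cup\{i\}$ already in $\mathcal{H}$. In the second case $e$ itself would move, so we need more care. Concretely, take $e\subseteq U$ with $j\in e$, $i\notin e$. If $i\in U$, then $e':=(e\setminus\{j\})\cup\{i\}\subseteq U$, so $e'\in\mathcal{H}$ and $e$ is not moved. If $i\notin U$, then $j\notin U$ forces $e$ not to contain $j$, so nothing moves. Hence $U$ is a clique in $\mathcal{H}'$ unless $j\in U$ and $i\notin U$. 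Even in that case $U$ survives precisely when every $e\in\binom{U}{r}$ with $j\in e$ satisfies $(e\setminus\{j\})\cup\{i\}\in\mathcal{H}$.

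Next we check that $\phi$ lands in cliques of $\mathcal{H}'$ when $U$ does not survive. Write $U'=(U\setminus\{j\})\cup\{i\}$, and take $f\in\binom{U'}{r}$.
\begin{itemize}
\item If $i\notin f$, then $f\subseteq U\setminus\{j\}$ avoids $j$, so $f\in\mathcal{H}$ is fixed and $f\in\mathcal{H}'$.
\item If $i\in f$, put $e=(f\setminus\{i\})\cup\{j\}\in\binom{U}{r}\subseteq\mathcal{H}$. Either $f\in\mathcal{H}$, and then $f$ contains $i$ and stays in $\mathcal{H}'$, or $f\notin\mathcal{H}$, and then $e$ is shifted to $f$.
\end{itemize}
Either way $f\in\mathcal{H}'$, so $\phi(U)$ is a clique in $\mathcal{H}'$.

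The remaining step is injectivity, which we expect to be the main point. Images of survivors are distinct, and images of non-survivors are distinct because $U\mapsto U'$ is injective on sets containing $j$ but not $i$. So we must rule out $U'=W$ for some non-survivor $U$ and surviving $W$. Suppose this happens. Then $W=U'$ contains $i$ but not $j$. Since $U$ does not survive, some $e\subseteq U$ with $j\in e$ has $f=(e\setminus\{j\})\cup\{i\}\notin\mathcal{H}$. But $f\subseteq U'=W$, and $W$ is a clique in $\mathcal{H}$. Note $W$ lies in the domain of $\phi$ (it is a clique of $\mathcal{H}$ that survives), so $\binom{W}{r}\subseteq\mathcal{H}$. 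This contradicts $f\notin\mathcal{H}$. Hence $\phi$ is an injection, and the inequality follows.
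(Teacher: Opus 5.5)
Your proof is correct. The map keeps every clique of $\mathcal{H}$ that still spans a clique in $S_{ij}(\mathcal{H})$ and sends each destroyed clique $U$ (necessarily $j\in U$, $i\notin U$) to $(U\setminus\{j\})\cup\{i\}$; this is a well-defined injection, and your collision argument (the missing $r$-set $f=(e\setminus\{j\})\cup\{i\}$ would lie inside a clique of $\mathcal{H}$) is exactly the key point.

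The paper does not reprove this lemma; it quotes it from Liu and Wang. Your argument is the same fix-or-shift injection template the paper uses for its sunflower analogue, Lemma~\ref{newlem}. The only blemish is the sentence ``In the second case $e$ itself would move,'' which is backwards: if the image is already in $\mathcal{H}$, then $e$ stays put. Your concrete case check right after it is correct, so this does not affect the argument.
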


Wang and Peng \cite{WaP} showed that the shifting operation does not decrease the number of copies of $S^r_{r-1,2}$ in any $r$-graph.

\begin{lem}[Wang and Peng \cite{WaP}]\label{lem2.3}
Let $\mathcal{H}$ be an $r$-graph on $[n]$ and $1\leq i<j\leq n$. Then $N(S^r_{r-1,2},S_{ij}(\mathcal{H}))\geq N(S^r_{r-1,2},\cH)$.
\end{lem}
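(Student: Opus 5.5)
The natural route is to compare, for each $(r-1)$-set, its degree before and after shifting. Write $\cH'=S_{ij}(\cH)$. Since $N(S^r_{r-1,2},\cG)=\sum_{T\in\binom{[n]}{r-1}}\binom{d_{\cG}(T)}{2}$, it suffices to show that
\[
\sum_{T}\binom{d_{\cH'}(T)}{2}\ \ge\ \sum_T\binom{d_{\cH}(T)}{2}.
\]
The $(r-1)$-sets $T$ split according to $T\cap\{i,j\}$. If $T$ contains both $i$ and $j$, or neither, then $d_{\cH'}(T)=d_{\cH}(T)$. For $T\supseteq\{i,j\}$ this holds because shifting never moves an edge containing both $i$ and $j$. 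For $T$ avoiding both, an edge $T\cup\{x\}$ with $x\notin\{i,j\}$ is unaffected. The edges $T\cup\{j\}$ and $T\cup\{i\}$ may change, but the number of edges among these two is preserved: $T\cup\{j\}$ is replaced by $T\cup\{i\}$ only when the latter is absent. So in both cases the degree is unchanged.

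The interesting sets are the pairs $T_i=A\cup\{i\}$ and $T_j=A\cup\{j\}$ with $A\in\binom{[n]\setminus\{i,j\}}{r-2}$. I group them by $A$ and show that the contribution $\binom{d(T_i)}{2}+\binom{d(T_j)}{2}$ does not decrease. Fix $A$, and for $x\notin A\cup\{i,j\}$ consider the four potential edges $A\cup\{i,x\}$, $A\cup\{j,x\}$, and the common edge $A\cup\{i,j\}$, which contributes to both degrees and is unchanged. For each $x$, let $a_x=[A\cup\{i,x\}\in\cH]$ and $b_x=[A\cup\{j,x\}\in\cH]$. Shifting sends $(a_x,b_x)=(0,1)$ to $(1,0)$ and leaves the other patterns alone, because the shift of $A\cup\{j,x\}$ is exactly $A\cup\{i,x\}$. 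Set
\[
p=\#\{x:(1,1)\},\qquad q=\#\{x:(1,0)\},\qquad u=\#\{x:(0,1)\},
\]
and let $c\in\{0,1\}$ indicate the common edge. Then $d_\cH(T_i)=c+p+q$ and $d_\cH(T_j)=c+p+u$, while $d_{\cH'}(T_i)=c+p+q+u$ and $d_{\cH'}(T_j)=c+p$. The sum of the two degrees is preserved, and the pair after shifting is a majorizing spread of the pair before, since $\max$ increases to $c+p+q+u$. By convexity of $y\mapsto\binom{y}{2}$, the sum $\binom{\cdot}{2}+\binom{\cdot}{2}$ does not decrease. Concretely, the difference equals $qu\ge0$.

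Summing over all $A$ and adding the unchanged classes gives the lemma. The only point needing care, and the main obstacle, is verifying the bookkeeping in the pairing step. The shift of each edge is determined within a single $A$-group, so edges do not move between groups. We also need every edge containing $j$ but not $i$ to be counted in exactly the groups indexed by its $(r-2)$-subsets avoiding $j$. This holds because an edge $e\ni j$ with $i\notin e$ contributes to $T_j=A\cup\{j\}$ precisely for $A=e\setminus\{j,x\}$, one choice per $x\in e\setminus\{j\}$, and its image contributes to the corresponding $A\cup\{i\}$ with the same $A$ and $x$. The same argument works for any $k$ with $\binom{y}{k}$ in place of $\binom{y}{2}$, by convexity.
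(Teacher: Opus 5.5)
Your proof is correct, and it takes a different route from the paper's. The paper does not reprove this lemma (it cites Wang and Peng). It does prove the general version, Lemma~\ref{newlem}, by an explicit injection $\phi$ from copies of $S^r_{r-1,k}$ in $\cH$ to copies in $S_{ij}(\cH)$. That injection has a seven-case definition depending on where $i$ and $j$ sit relative to the core and the petals, followed by a three-case injectivity check.

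You instead use three ingredients:
\begin{itemize}
\item the codegree identity $N(S^r_{r-1,2},\cG)=\sum_T\binom{d_{\cG}(T)}{2}$, which holds because two distinct edges sharing an $(r-1)$-set meet exactly in it;
\item the observation that codegrees of $(r-1)$-sets meeting $\{i,j\}$ in $0$ or $2$ elements are unchanged by $S_{ij}$;
\item a local step: for each $A$ you pair $A\cup\{i\}$ with $A\cup\{j\}$, and the degree pair is replaced by one with the same sum and larger spread.
\end{itemize}
All of your degree computations check out. In particular, with $m=c+p$ the increment $\binom{m+q+u}{2}+\binom{m}{2}-\binom{m+q}{2}-\binom{m+u}{2}$ is indeed $qu$.

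Your route buys several things. It is much shorter. It gives the exact increment $\sum_A q_Au_A$, so for $k=2$ it also tells you precisely when shifting leaves the count unchanged. It extends verbatim to every $k\ge 2$, since $y\mapsto\binom{y}{k}$ is discretely convex, which recovers Lemma~\ref{newlem}. More generally it applies to $\sum_T f(d(T))$ for any convex $f$, so it shows directly that $S_{ij}$ does not decrease $\|\cH\|_{r-1,k}$ for every real $k\ge 1$.

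The price is that your argument relies entirely on the count being a function of the $(r-1)$-codegrees. That fails for $S^r_{a,k}$ with $a\le r-2$, where petals must be pairwise disjoint outside the core; this is consistent with the paper's Remark that shifting can decrease such counts. The injection approach does not use the codegree formula. It is therefore the kind of argument one would try for configurations whose count is not determined by codegrees, at the cost of a long case analysis.

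One minor point: the ``bookkeeping'' worry in your last paragraph is unnecessary. You compute $d_{S_{ij}(\cH)}(T)$ exactly for every $T$, and the sets $T$ are partitioned by $|T\cap\{i,j\}|$. With that, the identity $N=\sum_T\binom{d(T)}{2}$ does all the accounting, and you never need to track how individual edges move between groups.
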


Wang and Peng raised the natural question: for which subhypergraphs $\cF$, apart from $K_s^{(r)}$ and $S^r_{r-1,2}$, does the shifting operation not decrease (or increase) the number of copies of $\cF$ in any $r$-graph? Here, we give an answer to this question for the sunflower $S^r_{r-1,k}$ for all $k\geq1$. More precisely, we establish that the shifting operation does not decrease the number of copies of $S^r_{r-1,k}$ in any $r$-graph, which extends Lemma \ref{lem2.3}.

\begin{lem}\label{newlem}
Let $k\geq 1$ and $\cH$ be an $r$-graph on $[n]$. For any $i,j \in [n]$ with $i < j$, we have $N(S^{r}_{r-1,k},S_{ij}(\cH))\geq N(S^{r}_{r-1,k},\cH)$.
\end{lem}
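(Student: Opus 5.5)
The plan is to avoid a direct injection between copies and instead use the identity noted in the introduction: for $k\ge 2$,
$$N(S^r_{r-1,k},\mathcal{G})=\sum_{T\in\binom{[n]}{r-1}}\binom{d_{\mathcal{G}}(T)}{k}.$$
For $k=1$ the statement is trivial, since $N(S^r_{r-1,1},\cdot)$ counts hyperedges and $S_{ij}$ is a bijection from $\mathcal{H}$ onto $S_{ij}(\mathcal{H})$, so $e(S_{ij}(\mathcal{H}))=e(\mathcal{H})$. Write $\mathcal{H}'=S_{ij}(\mathcal{H})$. I would group the $(r-1)$-sets $T$ by how they meet $\{i,j\}$ and compare $d_{\mathcal{H}'}(T)$ with $d_{\mathcal{H}}(T)$ group by group.

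\emph{Step 1: $T$ contains both or neither of $i,j$.} If $\{i,j\}\subseteq T$, every hyperedge containing $T$ contains $i$, so it is fixed by $S_{ij}$. No new hyperedge containing $j$ can be created, so $d_{\mathcal{H}'}(T)=d_{\mathcal{H}}(T)$.

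If $T\cap\{i,j\}=\emptyset$, the hyperedges containing $T$ have the form $T\cup\{x\}$. Only $T\cup\{j\}$ can move, and it becomes $T\cup\{i\}$ exactly when $T\cup\{i\}\notin\mathcal{H}$. Edges of the form $T\cup\{x\}$ with $x\ne j$ are unaffected, and no other edge can be shifted onto $T\cup\{x\}$: an edge shifted onto it would have to contain $i$, forcing $x=i$, and the preimage would then be $T\cup\{j\}$. Hence the degree is again unchanged.

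\emph{Step 2: pair up the remaining sets.} The remaining $(r-1)$-sets come in pairs $A\cup\{i\}$, $A\cup\{j\}$ with $A\in\binom{[n]\setminus\{i,j\}}{r-2}$. Set $c=[A\cup\{i,j\}\in\mathcal{H}]$; this edge is fixed by the shift. For $x\notin A\cup\{i,j\}$ let $a_x=[A\cup\{i,x\}\in\mathcal{H}]$ and $b_x=[A\cup\{j,x\}\in\mathcal{H}]$. Then
$$d_{\mathcal{H}}(A\cup\{i\})=c+\sum_x a_x,\qquad d_{\mathcal{H}}(A\cup\{j\})=c+\sum_x b_x.$$
By the definition of $S_{ij}$, the edge $A\cup\{j,x\}$ is moved to $A\cup\{i,x\}$ exactly when $b_x=1$ and $a_x=0$. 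Therefore after shifting the indicators become $a'_x=\max(a_x,b_x)$ and $b'_x=\min(a_x,b_x)$. I would also check that no other edge affects these degrees; this holds because any edge containing $A\cup\{i\}$ or $A\cup\{j\}$ has the form above.

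\emph{Step 3: convexity.} From Step 2, the pair of degrees $(p,q)$ becomes $(p',q')$ with $p'+q'=p+q$ and $p'\ge\max(p,q)$, so that $\min(p',q')\le\min(p,q)$. The function $m\mapsto\binom{m}{k}$ is convex on the nonnegative integers, because its forward difference $\binom{m}{k-1}$ is nondecreasing. Hence with the sum fixed, moving the pair further apart does not decrease the value, giving
$$\binom{p'}{k}+\binom{q'}{k}\ge\binom{p}{k}+\binom{q}{k}.$$
Summing over all $A$ and adding the unchanged contributions from Step 1 yields the lemma.

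The main point requiring care is Step 2, namely the exact bookkeeping that the shift acts coordinatewise as $(a_x,b_x)\mapsto(\max,\min)$ and touches nothing else. Once that is established, the convexity step is routine.
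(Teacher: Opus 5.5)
Your proof is correct, but it takes a different route from the paper.

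The paper builds an explicit injection $\phi$ from copies of $S^r_{r-1,k}$ in $\cH$ into copies in $S_{ij}(\cH)$. A copy whose edges are all fixed by $S_{ij}$ is sent to itself. Otherwise $\phi$ replaces $j$ by $i$ in the relevant edges according to three rules:
\begin{itemize}
\item $j$ is in the core and $i$ is a petal vertex;
\item $j$ is in the core and $i\notin V(F)$;
\item $j$ is a petal vertex and $i\notin V(F)$.
\end{itemize}
Injectivity is then checked by a case analysis on which rule produced the image.

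You never look at individual copies. For $k\ge 2$ you use $N(S^r_{r-1,k},\cG)=\sum_T\binom{d_{\cG}(T)}{k}$ and argue at the level of $(r-1)$-codegrees. You show that $S_{ij}$ leaves every $d(T)$ with $|T\cap\{i,j\}|\ne 1$ unchanged. For each $A$, it replaces the pair $\bigl(d(A\cup\{i\}),d(A\cup\{j\})\bigr)$ by a pair with the same sum that is further apart, via the coordinatewise rule $(a_x,b_x)\mapsto(\max,\min)$. Your bookkeeping there is right: a nontrivially shifted edge never contains $j$, and the only possible preimage of $A\cup\{i,x\}$ under a nontrivial shift is $A\cup\{j,x\}$. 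Convexity of $m\mapsto\binom{m}{k}$ then finishes the proof.

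Your approach is shorter and proves more. It shows that $\sum_T g(d(T))$ cannot decrease under shifting for any convex $g$. In particular $\|S_{ij}(\cH)\|_{r-1,k}\ge\|\cH\|_{r-1,k}$ follows directly for every real $k\ge 1$, without the Stirling-number expansion. It also explains why nothing similar is available for $S^r_{a,k}$ with $a\le r-2$ (cf.\ the Remark), since those counts are not functions of the $a$-degrees.

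The paper's copy-level injection is longer and needs the case analysis. Its advantage is that it does not rely on the count being a function of codegrees. That is the kind of argument one would need for the broader question of Wang and Peng about other subhypergraphs.
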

\begin{proof}[\bf Proof]
If $k=1$, then $S^{r}_{r-1,1}$ is a single hyperedge, and thus the conclusion is trivial. Next, suppose $k\geq2$. Recall that for every $e\in E(\cH)$, we have $S_{ij}(e)=(e \setminus \{j\}) \cup \{i\}$ if $j \in e$, $i \notin e$ and $(e \setminus \{j\}) \cup \{i\} \notin E(\mathcal{H})$. Now, we define a map $\phi$ from copies of $S^{r}_{r-1,k}$ in $\cH$ to copies of $S^{r}_{r-1,k}$ in $S_{ij}(\cH)$ as follows. For convenience, let $F=\{e_1,\dots,e_k\}$ denote a copy of $S^{r}_{r-1,k}$ in $\mathcal{H}$. Let $C(F)$ and $L(F)$ denote the core and the set of petal vertices of $F$, respectively.
\begin{itemize}
    \item If $j \notin V(F)$, then set $\phi(F)=S_{ij}(F)$. Note that in this case $S_{ij}(F)=F$.
    \item If $i,j \in C(F)$, then set $\phi(F)=S_{ij}(F)$. Note that in this case $S_{ij}(F)=F$.
    \item Suppose $j \in C(F)$ and $i\in L(F)\cap e_x$ for some $x\in [k]$. If $(e_m\setminus \{j\}) \cup \{i\} \in E(\cH)$ for all $m\in [k]\backslash x$, then set $\phi(F)=S_{ij}(F)$, and in this case $S_{ij}(F)=F$. Otherwise, set
        \begin{eqnarray}
        \phi(F)=e_x\cup\big\{(e_m \setminus \{j\}) \cup \{i\}: m\in [k]\backslash x\big\}.
        \end{eqnarray}
    \item If $j \in L(F)$ and $i\in C(F)$, then set $\phi(F)=S_{ij}(F)$. Note that in this case $S_{ij}(F)=F$.
    \item If $i,j \in L(F)$, then set $\phi(F)=S_{ij}(F)$. Note that in this case $S_{ij}(F)=F$.
    \item Suppose $i\notin V(F)$ and $j \in C(F)$. If $(e_m\setminus \{j\}) \cup \{i\} \in E(\cH)$ for all $m\in [k]$, then set $\phi(F)=S_{ij}(F)$, and in this case $S_{ij}(F)=F$. Otherwise, set \begin{eqnarray}
        \phi(F)=\big\{(e_m \setminus \{j\}) \cup \{i\}: m\in [k]\big\}.
        \end{eqnarray}
    \item Suppose $i\notin V(F)$ and $j \in L(F)\cap e_x$ for some $x\in [k]$. If $(e_x\setminus \{j\}) \cup \{i\} \in E(\cH)$, then set $\phi(F)=S_{ij}(F)$, and in this case $S_{ij}(F)=F$. Otherwise, set \begin{eqnarray}
        \phi(F)=\big\{(e_x\setminus \{j\})\cup\{i\}\big\} \cup \big\{e_m: m\in [k]\backslash x\big\}.
        \end{eqnarray}
\end{itemize}

It is straightforward to check that $\phi(F)$ is a copy of $S^{r}_{r-1,k}$ in $S_{ij}(\cH)$ for every such $F$ in $\cH$ by the definition of $S_{ij}$. It suffices to show that $\phi$ is injective. Suppose $F_1$ and $F_2$ are distinct copies of $S_{r-1,k}^r$ in $\mathcal{H}$ such that $\phi(F_1)=\phi(F_2)$. By the definition of $\phi$, we can assume that $\phi(F_1)\neq F_1$ or $\phi(F_2)\neq F_2$. Otherwise, we have $\phi(F_1)=F_1$ and $\phi(F_2)=F_2$, and thus $F_1=F_2$, a contradiction. Without loss of generality, we assume that $\phi(F_1)\neq F_1$. Thus, $\phi(F_1)$ satisfies condition (2.1), (2.2), or (2.3).

\smallskip
\smallskip
\noindent {\bf{Case 1.}} $\phi(F_1)$ satisfies (2.1) and all corresponding conditions.
\smallskip
\smallskip

It follows that $\phi(F_1)=\phi(F_2)=e_x\cup\big\{(e_m \setminus \{j\}) \cup \{i\}: m\in [k]\backslash x\big\}$. Since $\phi(F_1)$ satisfies (2.1) and all corresponding conditions, by the definition of $\phi$, we have $j \in C(F_1)$, $i\in L(F_1)\cap e_x$ for some $x\in [k]$, and $(e_y\setminus \{j\}) \cup \{i\} \notin E(\cH)$ for some $y\in [k]\backslash x$.

If $\phi(F_2)=F_2$, then $F_2=e_x\cup\big\{(e_m \setminus \{j\}) \cup \{i\}: m\in [k]\backslash x\big\}\subseteq \cH$, which contradicts $(e_y\setminus \{j\}) \cup \{i\} \notin E(\cH)$ for some $y\in [k]\backslash x$.
If $\phi(F_2)$ satisfies (2.1) and all corresponding conditions, then by the definition of $\phi$, we have $F_1=F_2$ as $\phi(F_1)=\phi(F_2)=e_x\cup\big\{(e_m \setminus \{j\}) \cup \{i\}: m\in [k]\backslash x\big\}$. This contradicts the assumption that $F_1\neq F_2$.
If $\phi(F_2)$ satisfies (2.2) and all corresponding conditions, then by the definition of $\phi$, we have $j\notin V(\phi(F_2))$, which contradicts $j\in V(\phi(F_1))=V(\phi(F_2))$.
Similarly, if $\phi(F_2)$ satisfies (2.3) and all corresponding conditions, then we have $j\notin V(\phi(F_2))$, which contradicts $j\in V(\phi(F_1))=V(\phi(F_2))$.

\smallskip
\smallskip
\noindent {\bf{Case 2.}} $\phi(F_1)$ satisfies (2.2) and all corresponding conditions.
\smallskip
\smallskip

It follows that $\phi(F_1)=\phi(F_2)=\big\{(e_m \setminus \{j\}) \cup \{i\}: m\in [k]\big\}$. Since $\phi(F_1)$ satisfies (2.2) and all corresponding conditions, by the definition of $\phi$, we have $i\notin V(F_1)$, $j \in C(F_1)$, and $(e_x\setminus \{j\}) \cup \{i\} \notin E(\cH)$ for some $x\in [k]$.

If $\phi(F_2)=F_2$, then $F_2=\big\{(e_m \setminus \{j\}) \cup \{i\}: m\in [k]\big\}\subseteq \cH$, which contradicts $(e_x\setminus \{j\}) \cup \{i\} \notin E(\cH)$ for some $x\in [k]$.
If $\phi(F_2)$ satisfies (2.1) and all corresponding conditions, then by the same argument as in Case 1, we obtain a contradiction.
If $\phi(F_2)$ satisfies (2.2) and all corresponding conditions, then by the definition of $\phi$, we have $F_1=F_2$ as $\phi(F_1)=\phi(F_2)=\big\{(e_m \setminus \{j\}) \cup \{i\}: m\in [k]\big\}$. This contradicts the assumption that $F_1\neq F_2$.
If $\phi(F_2)$ satisfies (2.3) and all corresponding conditions, then by the definition of $\phi$, we have $i\in L(\phi(F_2))$, which contradicts $i\in C(\phi(F_1))=C(\phi(F_2))$.

\smallskip
\smallskip
\noindent {\bf{Case 3.}} $\phi(F_1)$ satisfies (2.3) and all corresponding conditions.
\smallskip
\smallskip

It follows that $\phi(F_1)=\phi(F_2)=\big\{(e_x\setminus \{j\})\cup\{i\}\big\} \cup \big\{e_m: m\in [k]\backslash x\big\}$. Since $\phi(F_1)$ satisfies (2.3) and all corresponding conditions, by the definition of $\phi$, we have $i\notin V(F_1)$, $j \in L(F_1)\cap e_x$ for some $x\in [k]$, and $(e_x\setminus \{j\}) \cup \{i\} \notin E(\cH)$.

If $\phi(F_2)=F_2$, then $F_2=\big\{(e_x\setminus \{j\})\cup\{i\}\big\} \cup \big\{e_m: m\in [k]\backslash x\big\}\subseteq \cH$, which contradicts $(e_x\setminus \{j\}) \cup \{i\} \notin E(\cH)$.
If $\phi(F_2)$ satisfies (2.1) and all corresponding conditions, then by the same argument as in Case 1, we obtain a contradiction.
If $\phi(F_2)$ satisfies (2.2) and all corresponding conditions, then by the same argument as in Case 2, we obtain a contradiction.
If $\phi(F_2)$ satisfies (2.3) and all corresponding conditions, then by the definition of $\phi$, we have $F_1=F_2$ as $\phi(F_1)=\phi(F_2)=\big\{(e_x\setminus \{j\})\cup\{i\}\big\} \cup \big\{e_m: m\in [k]\backslash x\big\}$. This contradicts the assumption that $F_1\neq F_2$.

Therefore, $\phi(F_1)=\phi(F_2)$ implies $F_1=F_2$. Consequently, $\phi$ is injective. This completes the proof.
\end{proof}

\begin{figure}[h]
  \centering
  \includegraphics[scale=0.55]{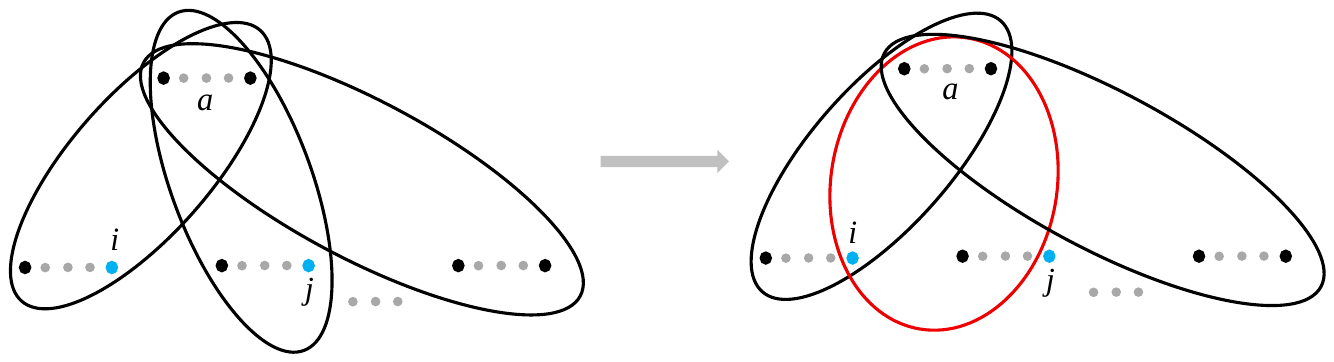}
  \caption{\small{The shifting operation for $S^{r}_{a,k}$}}\label{fig2}
\end{figure}

\begin{rem}
This non-decreasing property of the shifting operation does not hold for all sunflowers. For example, the shifting operation may strictly decrease the number of copies of $S^{r}_{a,k}$ in an $r$-graph whenever $1\leq a\leq r-2$ and $k\geq2$ (see Figure \ref{fig2}).
\end{rem}

\section{\normalsize Proofs}\label{sec:proofs}
In this section, we present a counting method for estimating the number of copies of $S_{r-1,k}^r$ in arbitrary $r$-graphs (see Lemma \ref{lem3.1}). Utilizing this counting method together with the monotonicity properties of the shifting operation, we provide the proof of Theorem~\ref{thm1}. More precisely, our proof strategy is structured as follows. First, using the fundamental properties of the shifting operation, namely the preservation of $M_s^r$-freeness (see Lemma \ref{lem2.1}) and the non-decreasing monotonicity of $S_{r-1,k}^r$ copy counts (see Lemma \ref{newlem}), we reduce the problem to shifted hypergraphs. Second, we use our established counting method for sunflowers to characterize the extremal shifted hypergraphs. Finally, we remove the shifted constraint and derive the extremal structure for general $r$-graphs.

\subsection{\normalsize Counting copies of $S^r_{r-1,k}$ in $r$-graphs}
In this subsection, we provide a counting method for estimating the number of copies of $S_{r-1,k}^r$ in any $r$-graph $\cH$ for all $k\ge1$.

Let $\cH$ be an $r$-graph and $U\subseteq V(\cH)$ be a nonempty subset. Let $\cH-U$ denote the $r$-graph obtained from $\cH$ by removing the vertices in $U$ and the hyperedges incident to them. 
The \textit{link hypergraph} of a vertex $v$ of $\cH$ is an $(r-1)$-graph, defined as
\begin{eqnarray*}
\cL_v(\cH)=\big\{e\backslash\{v\}\,:\,e\in E(\cH), v\in e\big\}.
\end{eqnarray*}

\begin{lem}\label{lem3.1}
Let $\cH$ be an $r$-graph on $[n]$, $i\in [n]$ and $k\geq1$.

\textbf{(i)} If $k=1$, then
\begin{eqnarray*}
N(S_{r-1,1}^r, \cH)=N(S_{r-1,1}^r, \cH-\{i\})+ N(S_{r-2,1}^{r-1}, \cL_i(\cH)).
\end{eqnarray*}

\textbf{(ii)} If $k=2$, then
\begin{eqnarray*}
N(S_{r-1,2}^r, \cH)\leq N(S_{r-1,2}^r, \cH-\{i\})+ N(S_{r-2,2}^{r-1}, \cL_i(\cH))+ rN(S_{r-1,1}^r, \cH-\{i\}),
\end{eqnarray*}

\textbf{(iii)} If $k\geq3$, then 
\begin{eqnarray*}
N(S_{r-1,k}^r, \cH)\leq N(S_{r-1,k}^r, \cH-\{i\})+ N(S_{r-2,k}^{r-1}, \cL_i(\cH))+ N(S_{r-1,k-1}^r, \cH-\{i\}),
\end{eqnarray*}

\textbf{(iv)} If $k\geq2$, then
\begin{eqnarray*}
N(S_{r-1,k}^r, \cH)\leq N(S_{r-1,k}^r, \cH-\{i\})+ N(S_{r-2,k}^{r-1}, \cL_i(\cH))+ e(\cH-\{i\})\cdot \frac{r}{k-1}\binom{n-r-1}{k-2}.
\end{eqnarray*}
\end{lem}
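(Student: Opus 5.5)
We classify each copy $F=\{e_1,\dots,e_k\}$ of $S^r_{r-1,k}$ in $\cH$ by how the fixed vertex $i$ meets it. There are three types: (a) $i\notin V(F)$; (b) $i\in C(F)$; (c) $i\in L(F)$, i.e., $i$ is the unique petal vertex of exactly one hyperedge $e_x$.

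For type (a), $F$ is a copy in $\cH-\{i\}$, so these copies number exactly $N(S^r_{r-1,k},\cH-\{i\})$.

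For type (b), every $e_m$ contains $i$. The sets $e_m\setminus\{i\}$ are $k$ hyperedges of $\cL_i(\cH)$ whose pairwise intersections are $C(F)\setminus\{i\}$, an $(r-2)$-set. This gives a copy of $S^{r-1}_{r-2,k}$ in $\cL_i(\cH)$. Conversely, such a copy lifts back by adding $i$, so the correspondence is bijective. Hence type (b) contributes exactly $N(S^{r-1}_{r-2,k},\cL_i(\cH))$; for $r=2$ we read $S^{1}_{0,k}$ as $k$ distinct singletons.

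For $k=1$, a single hyperedge either avoids $i$ or contains it, and the case of the core versus petal is vacuous. Splitting edges this way gives the equality in (i).

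For type (c) we bound the count by the other copies. Here $F\setminus\{e_x\}$ is a copy of $S^r_{r-1,k-1}$ in $\cH-\{i\}$ with core $T=C(F)$, and $e_x=T\cup\{i\}$ is determined by $T$ and $i$. The map $F\mapsto F\setminus\{e_x\}$ is therefore injective.

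For $k\ge3$, the core of $F\setminus\{e_x\}$ is determined by that family, since it has at least two petals. So type (c) contributes at most $N(S^r_{r-1,k-1},\cH-\{i\})$, giving (iii).

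For $k=2$, $F\setminus\{e_x\}$ is a single edge $e$, and $T$ can be any of the $r$ subsets of size $r-1$ of $e$. So the count is at most $r\,e(\cH-\{i\})=rN(S^r_{r-1,1},\cH-\{i\})$, giving (ii).

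For (iv), we count type (c) copies directly. Such a copy is determined by the choice of one hyperedge $f=e_m$ in $\cH-\{i\}$, the core $T\subset f$ ($r$ choices), and the remaining $k-2$ petals. Each remaining petal is $T\cup\{v\}$ with $v\notin f\cup\{i\}$, so there are at most $\binom{n-r-1}{k-2}$ ways to choose them. Each copy is counted once for each of its $k-1$ hyperedges avoiding $i$, so we divide by $k-1$. This yields the term $e(\cH-\{i\})\cdot\frac{r}{k-1}\binom{n-r-1}{k-2}$.

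The main subtle point is the overcount bookkeeping: injectivity in (iii), which needs $k-1\ge2$ so the core is determined, and the exact factor $k-1$ in (iv). Both follow once each type (c) copy is represented by its petals avoiding $i$. Everything else is a direct partition by the position of $i$.
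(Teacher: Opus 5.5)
Your proof is correct and follows essentially the same route as the paper. You use the same partition by the role of $i$ (absent, in the core, a petal vertex), the same bijections for the first two classes, the bound via $F\mapsto F\setminus\{e_x\}$ (with at most $r$ choices of core when $k=2$) for (ii)--(iii), and the same $(k-1)$-fold overcount for (iv). You also justify the injectivity and the exact multiplicity $k-1$ more carefully than the paper; just note that $F\mapsto F\setminus\{e_x\}$ is injective only for $k\ge 3$, as you correctly use later, not in general as your first sentence on type (c) suggests.
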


\begin{proof}[\bf Proof]
Observe that for $k=1$, we have $N(S_{r-1,1}^r, \cH)=e(\cH)$. Thus,
$$N(S_{r-1,1}^r, \cH)=e(\cH)=e(\cH-\{i\})+e(\cL_i(\cH))=N(S_{r-1,1}^r, \cH-\{i\})+ N(S_{r-2,1}^{r-1}, \cL_i(\cH)),$$
which establishes \textbf{(i)}.

Now suppose $k\geq2$. We partition all copies of $S_{r-1,k}^r$ in $\cH$ into three disjoint classes:
\begin{itemize}
    \item those not containing the vertex $i$, whose number is denoted by $\bar{N}_i(S_{r-1,k}^r, \cH)$.
    \item those in which $i$ is the core vertex, whose number is denoted by $N^1_i(S_{r-1,k}^r, \cH)$.
    \item those in which $i$ is a petal vertex, whose number is denoted by $N^2_i(S_{r-1,k}^r, \cH)$.
\end{itemize}
It is straightforward to verify that $\bar{N}_i(S_{r-1,k}^r, \cH)=N(S_{r-1,k}^r, \cH-\{i\})$ and $N^1_i(S_{r-1,k}^r, \cH)=N(S_{r-2,k}^{r-1}, \cL_i(\cH))$. Observe that $N^2_i(S_{r-1,2}^r, \cH)\leq r\cdot e(\cH-\{i\})=rN(S_{r-1,1}^r, \cH-\{i\})$ and $N^2_i(S_{r-1,k}^r, \cH)\leq N(S_{r-1,k-1}^r, \cH-\{i\})$ for $k\geq3$, which establishes \textbf{(ii)} and \textbf{(iii)}. Next, we use a different method to estimate $N^2_i(S_{r-1,k}^r, \cH)$ for $k\geq2$. We can choose a copy $S_{r-1,k}^r$ with petal vertex $i$ by the following three steps: first pick a hyperedge from $E(\cH-\{i\})$, then select its core from this hyperedge, and finally choose $k-2$ vertices from the remaining vertices of $V(\cH-\{i\})$ to serve as petal vertices. Note that in this way, each copy of $S_{r-1,k}^r$ with petal vertex $i$ is chosen exactly $k-1$ times. Therefore,
\begin{eqnarray*}
N^2_i(S_{r-1,k}^r, \cH)\leq e(\cH-\{i\})\cdot \binom{r}{r-1}\binom{n-r-1}{k-2}\Big/(k-1)=e(\cH-\{i\})\cdot \frac{r}{k-1}\binom{n-r-1}{k-2}.
\end{eqnarray*}
Combining these three estimates, we obtain the desired result in \textbf{(iv)}. This completes the proof.
\end{proof}

\subsection{\normalsize Shifted extremal hypergraphs}
As established earlier, it follows from Lemmas \ref{lem2.1} and \ref{newlem} that the shifting operation preserves $M_s$-freeness and does not decreas the number of copies of $S_{r-1,k}^r$ in any $r$-graph. Therefore, we may assume that $\cH$ is shifted. Next, we prove Theorem \ref{thm1} under the assumption that $\cH$ is shifted.


\begin{thm}\label{thm1.1}
Let integers $s\geq 1$ and $k,r\geq2$. Suppose $\cH$ is a shifted $r$-graph on $[n]$ with $\nu(\cH)< s$. Then there exists $n_0(k,s,r)$ such that for all $n >n_0(k,s,r)$,
\begin{eqnarray*}
N(S_{r-1,k}^r, \mathcal{H}) \leq N\left(S_{r-1,k}^r, \binom{[n]}{r} - \binom{[s,n]}{r}\right).
\end{eqnarray*}
Moreover, equality holds if and only if $\mathcal{H} = \binom{[n]}{r} - \binom{[s,n]}{r}$.
\end{thm}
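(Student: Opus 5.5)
We argue by induction on $s$, with $r$ and $k$ fixed; inside the induction the parameters $r-1$ and $k$ are also used. For $s=1$ the hypergraph $\cH$ is empty and both sides are $0$, so the base case is trivial.

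For the induction step we use the standard fact about shifted families. If $\cH$ is shifted and $\nu(\cH)<s$, then deleting vertex $1$ lowers the matching number: $\nu(\cH-\{1\})<s-1$. Indeed, if $\cH-\{1\}$ contained $s-1$ disjoint edges, they cover only $(s-1)r$ vertices. For $n$ large we could find an edge through $1$: shift some edge of $\cH$ into the free vertices and $1$ (stability guarantees $\{1\}\cup$ any $r-1$ vertices lies in $\cH$ once $\cH$ has an edge avoiding too much). That would give $s$ disjoint edges. More precisely, we split into two cases.

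\textbf{Case A:} $\cH$ contains all edges through $1$. Then $\cH-\{1\}$ is a shifted $r$-graph on $[2,n]$ with $\nu<s-1$, since any $(s-1)$-matching there plus a disjoint edge through $1$ would give $s$ disjoint edges. Write $\cG=\binom{[n]}{r}-\binom{[s,n]}{r}$. We compute $N(S^r_{r-1,k},\cH)$ exactly as in the partition of Lemma~\ref{lem3.1}:
\begin{itemize}
\item copies avoiding $1$, counted by induction on $[2,n]$;
\item copies with $1$ in the core, equal to $N(S^{r-1}_{r-2,k},\binom{[2,n]}{r-1})$, the same as for $\cG$;
\item copies with $1$ as a petal vertex.
\end{itemize}
For the last class, note that any $(r-1)$-set $T\subset[2,n]$ together with $1$ is an edge. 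Hence these copies are indexed by $T$, the $k-1$ other petals in $\cH-\{1\}$ with core $T$, and this is monotone in the codegrees $d_{\cH-\{1\}}(T)$. So it suffices that $\cH-\{1\}$ simultaneously maximizes $\sum_T\binom{d(T)}{k}$ and $\sum_T\binom{d(T)}{k-1}$. By the induction hypothesis (applied to both $k$ and $k-1$, using Theorem~\ref{Frankl} when $k-1=1$), both are maximized exactly by $\binom{[2,n]}{r}-\binom{[s,n]}{r}$. This gives the bound, and equality forces $\cH-\{1\}$ to be that family, hence $\cH=\cG$. For this reason I would strengthen the induction to handle all $k'\le k$ at once.

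\textbf{Case B:} some edge through $1$ is missing. Because $\cH$ is shifted, $\{1,n-r+2,\dots,n\}\notin\cH$. Then no edge of $\cH$ avoids... Instead the key estimate is that shiftedness plus $\nu<s$ forces every vertex $v\ge (s-1)r+\dots$, or more usefully, every $v\ge sr$, to have small degree. In a shifted family with $\nu<s$, every edge meets $[sr-1]$ in some controlled way. If $\{1\}\cup T\notin\cH$ for some $T$, the degree of $1$ is at most $\binom{n-1}{r-1}-\binom{n-sr}{r-1}$-ish. So every vertex, and hence every $(r-1)$-set, has codegree $O(1)$ in the relevant range, unless it meets $[sr]$.

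Then we use Lemma~\ref{lem3.1}(iv) iteratively, or directly: $N(S^r_{r-1,k},\cH)\le\sum_T\binom{d(T)}{k}$. In Case B, all $(r-1)$-sets have codegree at most about $sr$, except those meeting a bounded set. Those meeting the bounded set number $O(n^{r-2})$ and have codegree at most $n$. That gives $O(n^{r-2+k})$, but the count for $\cG$ is $\Theta(n^{r-2+k})$ as well, coming from the $(s-1)\binom{n}{r-2}$ sets $T$ meeting $[s-1]$, each with codegree about $n$. So the constant matters. The main obstacle is this constant comparison: I would show that in Case B the number of $(r-1)$-sets with linear codegree is at most $(s-2+o(1))\binom{n}{r-2}$. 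The missing edge through $1$, together with shiftedness and $\nu<s$, implies that only $s-2$ vertices can be of ``high'' type (linear degree in the link sense), via the argument of Frankl's Theorem~\ref{Frankl}. This yields strict inequality for $n>n_0(k,s,r)$.
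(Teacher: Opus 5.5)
Your Case A matches the paper's computation. There the link of $1$ is complete, the partition of Lemma~\ref{lem3.1} holds with equality, and induction on $s$ is applied to $\cH-\{1\}$ for both $k$ and $k-1$, using Theorem~\ref{Frankl} when $k-1=1$. Running the induction over all $k'\le k$ at once is the right way to make that step rigorous.

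The gap is Case B, and its key assertion is false. For $s\ge 2$, delete from $\cH^*=\binom{[n]}{r}-\binom{[s,n]}{r}$ the $s-1$ edges containing $[n-r+2,n]$. The result is shifted, has $\nu<s$, and misses $\{1,n-r+2,\dots,n\}$. Yet vertex $1$ still has degree $\binom{n-1}{r-1}-1$, and every $(r-1)$-set meeting $[s-1]$ still has codegree at least $n-r$. So there are at least $(s-1)\binom{n-s+1}{r-2}$ sets of linear codegree, not $(s-2+o(1))\binom{n}{r-2}$.

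Worse, this family has at most $O(n^{k-1})$ fewer copies of $S^r_{r-1,k}$ than $\cH^*$. So Case B contains families that agree with $\cH^*$ at leading order, and no comparison of leading constants can give the strict inequality or the uniqueness. (Your opening ``standard fact'' also fails in general: for $r=3$, $s=2$, the triples meeting $[3]$ at least twice form a shifted intersecting family with $\nu(\cH-\{1\})=1$.)

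What is missing is an exact reduction of Case B to Case A; the paper does this with Claim~\ref{claim1} plus edge-maximality. Let $E_0=\{1,(s-1)r+2,\dots,sr\}$ and split on whether $E_0\in\cH$.

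\emph{If $E_0\notin\cH$:} every edge meets $[(s-1)r+1]$ at least twice, since an edge meeting it at most once can be shifted down to $E_0$. So $\cH$ lies in the family of such $r$-sets. That family has only $O(n^{r+k-3})$ copies, against $\Theta(n^{r+k-2})$ for $\cH^*$. Here your asymptotic comparison does suffice.

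\emph{If $E_0\in\cH$:} add all $r$-sets through $1$. The result is still shifted and still has $\nu<s$. Indeed, an $s$-matching would use one new edge and leave an $(s-1)$-matching in $\cH-\{1\}$. By shiftedness that matching can be moved into $[2,(s-1)r+1]$, and $E_0$ then completes it to an $s$-matching of $\cH$, a contradiction. Case A now bounds the enlarged family.

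For the equality case: for large $n$ every edge of $\cH^*$ lies in a copy of $S^r_{r-1,k}$, through an $(r-1)$-subset containing a vertex of $[s-1]$. Hence a proper subfamily of $\cH^*$ has strictly fewer copies, and equality forces $\cH=\cH^*$.
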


\begin{proof}[\bf Proof]
We proceed by induction on $s$. If $s=1$, then the conclusion trivially holds. Now we assume that $s>1$ and the conclusion holds for all $s'<s$. Suppose $\cH$ is a shifted $r$-graph with $\nu(\cH)< s$ that maximizes $N(S_{r-1,k}^r,\mathcal{H})$. Since $\binom{[n]}{r}-\binom{[s,n]}{r}$ is shifted with $\nu(\mathcal{H})< s$, we have
\begin{eqnarray}
N(S_{r-1,k}^r, \mathcal{H}) \geq N\left(S_{r-1,k}^r, \binom{[n]}{r} - \binom{[s,n]}{r}\right).
\end{eqnarray}
We may assume that $\cH$ is edge-maximal (i.e., adding any new hyperedge to $E(\mathcal{H})$ would yield a copy of $M_s^{r}$). Otherwise, we may add an additional hyperedge to $E(\mathcal{H})$, which does not decrease the number of copies of $S_{r-1,k}^r$. For convenience, set $\cH^*:=\binom{[n]}{r} - \binom{[s,n]}{r}$. We have the following claim.

\begin{cla}\label{claim1}
There exists $n_1(k,s,r)$ such that for all $n >n_1(k,s,r)$, we have $\{1,(s-1)r+2,\dots,sr\}\in E(\cH)$.
\end{cla}
\begin{proof}[\bf Proof of Claim]
Suppose to the contrary that $\{1,(s-1)r+2,\dots,sr\}\notin E(\cH)$. Since $\cH$ is shifted, $|e\cap[(s-1)r+2,n]|\leq r-2$ for any $e\in E(\cH)$. This implies that $\cH\subseteq \cH'$, where $\cH'$ is the $r$-graph consisting of all hyperedges intersecting $[(s-1)r+1]$ in at least two vertices. Therefore, $N(S_{r-1,k}^r, \mathcal{H})\leq N(S_{r-1,k}^r, \cH')$. Next, we prove that $N(S_{r-1,k}^r, \cH')< N(S_{r-1,k}^r, \cH^*)$.

For any $r$-graph $\cG$, we may count the number of copies of $S_{r-1,k}^r$ by first choosing an $(r-1)$-set from $V(\cG)$, and then selecting $k$ distinct hyperedges from all hyperedges of $\mathcal{G}$ containing it. Note that there are $r-1$ types of $(r-1)$-sets in $\cH^*$, namely those intersecting $[s-1]$ in at least one and at most $r-1$ vertices. Thus,
\begin{eqnarray}
N(S_{r-1,k}^r, \cH^*)&=& \binom{s-1}{r-1}\cdot\binom{n-r+1}{k}+\binom{s-1}{r-2}\binom{n-s+1}{1}\cdot\binom{n-r+1}{k} \notag\\
&{}& +\cdots+\binom{s-1}{1}\binom{n-s+1}{r-2}\cdot\binom{n-r+1}{k}+\binom{n-s+1}{r-1}\cdot\binom{s-1}{k} \notag\\
&=& \binom{s-1}{1}\binom{n-s+1}{r-2}\binom{n-r+1}{k}+O(n^{r+k-3}) \notag\\
&=& \frac{(s-1)n^{r+k-2}}{(r-2)!k!}+O(n^{r+k-3}).
\end{eqnarray}

Similarly, there are $r-2$ types of $(r-1)$-sets in $\cH'$, namely those intersecting $[(s-1)r+1]$ in at least $2$ and at most $r-1$ vertices. Thus,
\begin{eqnarray}
N\left(S_{r-1,k}^r, \cH'\right)&=& \binom{(s-1)r+1}{r-1}\cdot\binom{n-r+1}{k}+\binom{(s-1)r+1}{r-2}\binom{n-(s-1)r-1}{1}\cdot\binom{n-r+1}{k} \notag\\
&{}& +\cdots+\binom{(s-1)r+1}{2}\binom{n-(s-1)r-1}{r-3}\cdot\binom{n-r+1}{k} \notag\\
&{}& +\binom{(s-1)r+1}{1}\binom{n-(s-1)r-1}{r-2}\cdot\binom{(s-1)r}{k} \notag\\
&=& \binom{(s-1)r+1}{2}\binom{n-(s-1)r-1}{r-3}\binom{n-r+1}{k}+O(n^{r+k-4}) \notag\\
&=& \frac{\binom{(s-1)r+1}{2}n^{r+k-3}}{(r-3)!k!}+O(n^{r+k-4}).
\end{eqnarray}

Combining (3.2) and (3.3), it follows that there exists $n_1(k,s,r)$ such that for all $n >n_1(k,s,r)$, we have $N(S_{r-1,k}^r, \cH')< N(S_{r-1,k}^r, \cH^*)$. Therefore, $N(S_{r-1,k}^r, \mathcal{H})<N(S_{r-1,k}^r, \cH^*)$, which contradicts inequality (3.1).
\end{proof}

We also claim that $\{1,n-r+2,\dots,n\}\in E(\cH)$. Otherwise, suppose $\{1,n-r+2,\dots,n\}\notin E(\cH)$. Since $\cH$ is edge-maximal, $\cH\cup\{1,n-r+2,\dots,n\}$ contains a copy of $M_s^r$, where $\{1,n-r+2,\dots,n\}\in M_s^r$. This implies that $\cH$ contains a copy of $M_{s-1}^r$ such that $1,n-r+2,\dots,n\notin V(M_{s-1}^r)$. Since $\cH$ is shifted, there exists a copy of $M_{s-1}^r$ in $[2,(s-1)r+1]$, denoted by $M$. By Claim \ref{claim1}, $\{1,(s-1)r+2,\dots,sr\}\in E(\cH)$. Hence, $M\cup\{1,(s-1)r+2,\dots,sr\}$ forms a copy of $M_s^r$ in $\cH$, a contradiction. Thus, $\{1,n-r+2,\dots,n\}\in E(\cH)$. Then $\cL_1(\cH)=\binom{[2,n]}{r-1}$ as $\cH$ is shifted. This implies that $\nu(\cH-\{1\})<s-1$. Indeed, if $\cH-\{1\}$ contains a copy of $M_{s-1}^r$ (denoted by $M'$), then we may find a copy of $M_{s}^r:=M'\cup\big\{ \{1\}\cup S\big\}$ in $\cH$ for some $S\in \binom{[2,n]}{r-1}$, a contradiction.

Using the counting method from Claim~\ref{claim1}, we obtain that for all $n >n_1(k,s,r)$,
\begin{eqnarray}
N(S_{r-2,k}^{r-1}, \cL_1(\cH))= N\left(S_{r-2,k}^{r-1}, \binom{[2,n]}{r-1}\right)= \binom{n-1}{r-2}\cdot\binom{n-r+1}{k}.
\end{eqnarray}

Since $\nu(\cH-\{1\})<s-1$, by Theorem \ref{Frankl}, we get 
\begin{eqnarray}
N(S_{r-1,1}^r, \cH-\{1\})=e(\cH-\{1\})\leq N\left(S_{r-1,1}^r, \binom{[2,n]}{r}-\binom{[s,n]}{r}\right) 
\end{eqnarray}
for all $n>\max\{(2s-1)r-s,n_1(k,s,r)\}$. Meanwhile, by the induction hypothesis, 
\begin{eqnarray}
N(S_{r-1,k-1}^r, \cH-\{1\})\leq N\left(S_{r-1,k-1}^r, \binom{[2,n]}{r}-\binom{[s,n]}{r}\right)
\end{eqnarray}
for $k\geq3$. 

Combining (3.4)--(3.6), and by Lemma \ref{lem3.1}\,\textbf{(ii)} and \textbf{(iii)}, we have
\begin{eqnarray*}
N(S_{r-1,2}^r, \cH)&\leq& N(S_{r-1,2}^r, \cH-\{1\})+ N(S_{r-2,2}^{r-1}, \cL_1(\cH))+ rN(S_{r-1,1}^r, \cH-\{1\}) \\
&\leq& N(S_{r-1,2}^r, \cH-\{1\})+\binom{n-1}{r-2}\binom{n-r+1}{2}+ rN\left(S_{r-1,1}^r, \binom{[2,n]}{r}-\binom{[s,n]}{r}\right),
\end{eqnarray*}
and
\begin{eqnarray*}
N(S_{r-1,k}^r, \cH)&\leq& N(S_{r-1,k}^r, \cH-\{1\})+ N(S_{r-2,k}^{r-1}, \cL_1(\cH))+ N(S_{r-1,k-1}^r, \cH-\{1\}) \\
&\leq& N(S_{r-1,k}^r, \cH-\{1\})+\binom{n-1}{r-2}\binom{n-r+1}{k}+ N\left(S_{r-1,k-1}^r, \binom{[2,n]}{r}-\binom{[s,n]}{r}\right)
\end{eqnarray*}
for $k\geq3$. Therefore, 
\begin{eqnarray}
N(S_{r-1,2}^r, \cH^*)&\leq& N(S_{r-1,2}^r, \cH-\{1\})+\binom{n-1}{r-2}\binom{n-r+1}{2} \notag\\
&{}& +rN\left(S_{r-1,1}^r, \binom{[2,n]}{r}-\binom{[s,n]}{r}\right),
\end{eqnarray}
and
\begin{eqnarray}
N(S_{r-1,k}^r, \cH^*)&\leq& N(S_{r-1,k}^r, \cH-\{1\})+\binom{n-1}{r-2}\binom{n-r+1}{k} \notag\\
&{}& +N\left(S_{r-1,k-1}^r, \binom{[2,n]}{r}-\binom{[s,n]}{r}\right)
\end{eqnarray}
for $k\geq3$. 

To estimate $N(S_{r-1,k}^r, \cH-\{1\})$, let us use the method from Lemma~\ref{lem3.1} to count the number of copies of $S_{r-1,k}^r$ in $\cH^*$. We partition all copies of $S_{r-1,k}^r$ in $\cH^*$ into three disjoint classes:
\begin{itemize}
    \item those not containing the vertex $1$, whose number is denoted by $\bar{N}_1(S_{r-1,k}^r, \cH^*)$.
    \item those in which $1$ is the core vertex, whose number is denoted by $N^1_1(S_{r-1,k}^r, \cH^*)$.
    \item those in which $1$ is a petal vertex, whose number is denoted by $N^2_1(S_{r-1,k}^r, \cH^*)$.
\end{itemize}
It is straightforward to verify that
\begin{eqnarray}
\bar{N}_1(S_{r-1,k}^r, \cH^*)=N(S_{r-1,k}^r, \cH^*-\{1\})=N\left(S_{r-1,k}^r, \binom{[2,n]}{r}-\binom{[s,n]}{r}\right),
\end{eqnarray}
and
\begin{eqnarray}
N^1_1(S_{r-1,k}^r, \cH^*)=N(S_{r-2,k}^{r-1}, \cL_1(\cH^*))=N\left(S_{r-2,k}^{r-1}, \binom{[2,n]}{r-1}\right)= \binom{n-1}{r-2}\binom{n-r+1}{k}.
\end{eqnarray}
Now we estimate $N^2_1(S_{r-1,k}^r, \cH^*)$. Since $\cL_1(\cH^*)=\binom{[2,n]}{r-1}$, we have 
\begin{eqnarray}
N^2_1(S_{r-1,2}^r, \cH^*)= rN(S_{r-1,1}^r, \cH^*-\{1\})= rN\left(S_{r-1,1}^r, \binom{[2,n]}{r}-\binom{[s,n]}{r}\right), 
\end{eqnarray}
and
\begin{eqnarray}
N^2_1(S_{r-1,k}^r, \cH^*)= N(S_{r-1,k-1}^r, \cH^*-\{1\})= N\left(S_{r-1,k-1}^r, \binom{[2,n]}{r}-\binom{[s,n]}{r}\right)
\end{eqnarray}
for $k\geq3$. 

Combining (3.7)--(3.12), we obtain that for all $n>\max\{(2s-1)r-s,n_1(k,s,r)\}$ and $k\geq2$, 
\begin{eqnarray*}
N(S_{r-1,k}^r, \cH-\{1\})\geq N\left(S_{r-1,k}^r, \binom{[2,n]}{r}-\binom{[s,n]}{r}\right).
\end{eqnarray*}
Recall that $\nu(\cH-\{1\})<s-1$. By the induction hypothesis, we have $\cH-\{1\}=\binom{[2,n]}{r}-\binom{[s,n]}{r}$. Since $\cL_1(\cH)=\binom{[2,n]}{r-1}$, we have $\cH=\binom{[n]}{r}-\binom{[s,n]}{r}$. This completes the proof.
\end{proof}

\subsection{\normalsize Proof of Theorem \ref{thm1}}
For $k=1$, Theorem \ref{thm1} follows from Theorem \ref{Frankl}. For the case where $k\geq2$, Theorem \ref{thm1.1} implies that the unique extremal hypergraph among all shifted hypergraphs with $\nu(\cH)<s$ is $\binom{[n]}{r}-\binom{[s,n]}{r}$. Next, we prove that up to isomorphism, this is the unique extremal hypergraph with $\nu(\mathcal{H})<s$. In \cite{WaP}, Wang and Peng obtained the following result.

\begin{lem}[Wang and Peng \cite{WaP}]\label{thm1.2}
Let $n\geq 2r+s-2$ and $\cH\subset \binom{[n]}{r}$ is not isomorphic to $\binom{[n]}{r}-\binom{[s,n]}{r}$ but $S_{ij}(\mathcal{H}) = \binom{[n]}{r}-\binom{[s,n]}{r}$, then $\nu(\mathcal{H}) \geq s$.
\end{lem}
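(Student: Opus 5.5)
We argue by contradiction: assume $\nu(\cH)\le s-1$. The idea is to use the structure of $S_{ij}(\cH)=\cH^*:=\binom{[n]}{r}-\binom{[s,n]}{r}$ to show that $\cH$ itself is the family of all $r$-sets meeting some $(s-1)$-set, which contradicts the hypothesis that $\cH$ is not isomorphic to $\cH^*$.

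\textbf{Step 1: reduce to the case $j\in[s,n]$ and $i\in[s-1]$.} Since $S_{ij}$ is a bijection onto its image, $|\cH|=|\cH^*|$. Moreover, every hyperedge of $\cH$ either equals its image or is obtained from it by replacing $i$ with $j$.
\begin{itemize}
\item If $i,j\in[s-1]$ or $i,j\in[s,n]$, then $\cH^*$ is invariant under the transposition $(i\,j)$. One checks that $S_{ij}(\cH)=\cH^*$ then forces $\cH=\cH^*$, contradicting the hypothesis.
\item Since $i<j$, the case $i\in[s,n]$, $j\in[s-1]$ cannot occur.
\end{itemize}
So we may assume $i\in[s-1]$ and $j\in[s,n]$.

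\textbf{Step 2: describe $\cH$.} Let $\cA$ be the set of hyperedges $e\in\cH$ that are moved by $S_{ij}$, so $j\in e$, $i\notin e$, and $e'=(e\setminus\{j\})\cup\{i\}\in\cH^*$ with $e'\notin\cH$. Every unmoved hyperedge of $\cH$ lies in $\cH^*$.

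Take $e\in\cA$. Then $e$ contains $j$ and avoids $i$. If $e\notin\cH^*$, then $e$ avoids $[s-1]$ entirely. Since $|\cH|=|\cH^*|$, we get
\[
\cH=\big(\cH^*\setminus\{e' : e\in\cA\}\big)\cup\cA .
\]
Note that $e\in\cH^*$ and $e'\in\cH^*$ may both hold, but then $e'$ would be in $\cH$ unless it is one of the removed sets. Careful bookkeeping here shows that $\cA$ splits into two parts:
\begin{itemize}
\item hyperedges $e$ meeting $[s-1]\setminus\{i\}$, which simply stay in $\cH^*$ (their shifts are already absent);
\item hyperedges $e\subseteq[s,n]$ with $j\in e$, whose images are the sets meeting $[s-1]$ only in $i$.
\end{itemize}

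\textbf{Step 3: the main obstacle.} Set $\cB=\{e\in\cH: e\cap[s-1]=\emptyset\}$; these are $r$-sets in $[s,n]$ containing $j$. Set $\cC=\{f\in\cH^*: f\cap[s-1]=\{i\},\ (f\setminus\{i\})\cup\{j\}\in\cB\}$; these are the missing sets. The goal is to show: if $\cB\ne\emptyset$ and $\cH$ is not the star family of $([s-1]\setminus\{i\})\cup\{j\}$, then $\cH$ contains an $s$-matching. Since $\cH$ is not isomorphic to $\cH^*$, at least one set of the form $\{i\}\cup S$ with $S\subseteq[s,n]\setminus\{j\}$ remains in $\cH$ (it contains neither $j$ nor any other vertex of $[s-1]$). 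The matching is then built as follows.
\begin{itemize}
\item Take a hyperedge $B\in\cB$, which avoids $[s-1]$ and contains $j$.
\item Take a remaining hyperedge $\{i\}\cup S$ with $S\cap B=\emptyset$.
\item For each $x\in[s-1]\setminus\{i\}$, take the full star edge $\{x\}\cup T_x$ of $\cH^*$, which is present in $\cH$, with the $T_x$ pairwise disjoint and disjoint from everything chosen so far.
\end{itemize}
This is possible because $n\ge 2r+s-2$. The delicate point is arranging $S\cap B=\emptyset$. Suppose instead that every surviving $\{i\}\cup S$ meets every $B\in\cB$. Then $\cB$ contains all $r$-sets through $j$ in $[s,n]$ compatible with these intersections, and a counting or shifting argument inside $[s,n]\setminus\{j\}$ (of size at least $2r-1$) shows that $\cH$ is isomorphic to $\cH^*$ via swapping $i$ and $j$. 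This step is where I expect the real work, and it is where the bound $n\ge 2r+s-2$ is used sharply: one needs two disjoint $(r-1)$-sets in $[s,n]\setminus\{j\}$ when choosing $S$ and $B\setminus\{j\}$. The remaining $s-2$ star edges then need only $(s-2)(r-1)$ further vertices, so the required room is there.
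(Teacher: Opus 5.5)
The paper gives no proof of this lemma (it is quoted from Wang and Peng). Your argument cannot be completed under the stated hypothesis, because the statement is false near $n=2r+s-2$.

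\textbf{The vertex count fails.} Your step ``the required room is there'' does not hold. The matching $B$, $\{i\}\cup S$, $\{x\}\cup T_x$ uses $r+(s-1)(r-1)$ vertices of $[s,n]$, so it needs $n\ge sr$. After placing $B$ and $S$, only $n-s-2r+2$ vertices of $[s,n]$ remain, and this is $0$ when $n=2r+s-2$. For instance, take $r=2$, $s=3$, $n=5$ and $\cH=\{12,13,15,23,24,25,34\}$. Then $S_{13}(\cH)=\{12,13,14,15,23,24,25\}=\binom{[5]}{2}-\binom{[3,5]}{2}$. The degree sequence $(4,3,3,2,2)$ of $\cH$ differs from $(4,4,2,2,2)$, and $\nu(\cH)\le 2$ trivially.

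\textbf{The ``delicate point'' fails too.} The step you left as ``a counting or shifting argument'' is exactly a connectivity statement. Let $X=[s,n]\setminus\{j\}$. A set $\{i\}\cup S$ with $S\in\binom{X}{r-1}$ survives iff $S\cup\{j\}\notin\mathcal{A}$. So you need an edge of the Kneser graph on $\binom{X}{r-1}$ between $\{B\setminus\{j\}: B\in\mathcal{A}\}$ and its nonempty complement. For $r\ge 3$ this requires $|X|\ge 2r-1$; at $|X|=2r-2$ the Kneser graph is a perfect matching. For example, take $s=2$, $r=3$, $n=6$, $i=1$, $j=2$, $\mathcal{A}=\{234,256\}$. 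The family $\cH=(\cH^*\setminus\{134,156\})\cup\{234,256\}$ has $S_{12}(\cH)=\cH^*$, no vertex common to all its edges, and $\nu(\cH)=1$. So $n\ge 2r+s-2$ is not ``used sharply''. It is too weak by one in the Kneser step and by $(s-2)(r-1)$ in the vertex count.

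\textbf{How to repair it.} Assume $n\ge\max\{sr,\,2r+s-1\}$; this is harmless for the paper, which only applies the lemma for huge $n$. Your construction then goes through once you supply the missing step:
\begin{itemize}
\item If $P,Q\in\binom{X}{r-1}$ differ in one element, then $|X\setminus(P\cup Q)|\ge r-1$, so some $(r-1)$-set $R$ is disjoint from both.
\item Single-element swaps connect $\binom{X}{r-1}$, so the Kneser graph is connected.
\item An edge leaving $\{B\setminus\{j\}: B\in\mathcal{A}\}$ gives $B\in\mathcal{A}$ and a surviving $\{i\}\cup S$ with $S\cap B=\emptyset$.
\item The $s-2$ star edges then fit because $n\ge sr$.
\end{itemize}

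\textbf{Step 2 needs cleaning up.} The first part of your split of $\mathcal{A}$ is empty. A moved edge $e$ cannot lie in $S_{ij}(\cH)$: an unmoved image equals an unmoved edge, and a moved image contains $i$. So $e\notin\cH^*$, i.e.\ $e\subseteq[s,n]$. Hence $\cH=(\cH^*\setminus\mathcal{A}')\cup\mathcal{A}$, where $\mathcal{A}'=\{(e\setminus\{j\})\cup\{i\}: e\in\mathcal{A}\}$ and $\mathcal{A}\subseteq\{e\in\binom{[s,n]}{r}: j\in e\}$. Conversely, every such $\mathcal{A}$ occurs, which is how the examples above arise.
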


The above lemma implies that any $r$-graph $\cH$ satisfying $S_{ij}(\mathcal{H}) = \binom{[n]}{r}-\binom{[s,n]}{r}$ and $\nu(\mathcal{H})<s$ is isomorphic to $\binom{[n]}{r}-\binom{[s,n]}{r}$. Let $n'_0(k,s,r)$ denote the constant given in Theorem~\ref{thm1.1}. Therefore, for all $k\geq 2$ and $n>n_0(k,s,r):=\max\{2r+s-3,n'_0(k,s,r)\}$, Theorem \ref{thm1} follows from Theorem \ref{thm1.1} and Lemma \ref{thm1.2}.

\section{\normalsize Concluding remarks}\label{4}
\smallskip
\noindent $\bullet$ We have determined the maximum number of copies of $S_{r-1,k}^r$ in an $r$-graph with bounded matching number and characterized all extremal hypergraphs (see Theorem \ref{thm1}). Recall that our main tool is the shifting method. In general, for generalized Tur\'an problems on hypergraphs, two conditions are required to apply the shifting method. First, the shifting operation must preserve $\cF$-freeness. Second, it must not decrease the number of copies of the given subhypergraph. Regarding the former condition, we note that the shifting operation preserves $M_s^r$-freeness (see Lemma \ref{lem2.1}), but does not preserve $P^r_\ell$-freeness, $C^r_\ell$-freeness, $S^r_\ell$-freeness, or $K^r_3$-freeness (see Figure \ref{fig1}). Regarding the latter condition, we note that the shifting operation does not decrease the number of copies of $K_s^{(r)}$ (see Lemma \ref{lem2.2}) or of $S_{r-1,k}^r$ for all $k\geq1$ (see Lemma \ref{newlem}), but may strictly decrease the number of copies of $S_{a,k}^r$ for all $1\leq a\leq r-2$ and $k\geq2$ (see Figure \ref{fig2}). This naturally leads to the following two questions.

\begin{que}
For which hypergraphs $\cF$ does the shifting operation preserve $\cF$-freeness in any $r$-graph?
\end{que}

\begin{que}
For which subhypergraphs $\cG$ does the shifting operation not decrease the number of copies of $\cG$ in any $r$-graph?
\end{que}

\bigskip
\noindent $\bullet$ In Theorem \ref{thm1}, we only establish the existence of $n_0(k,s,r)$. In fact, through more delicate calculations by comparing inequalities (3.2) and (3.3) in the proof of Theorem \ref{thm1.1}, one can derive an explicit expression for $n_1(k,s,r)$. As a consequence, an explicit expression for $n_0(k,s,r)$ in Theorem \ref{thm1} can be obtained.

\bigskip
\noindent $\bullet$ When $r=3$, the lower bound on $n$ given by Theorem \ref{Frankl} is $n\ge 5s-2$. Nevertheless, Frankl derived a stronger lower bound $n\ge 3s$ in \cite{Fran2017}. By substituting this stronger bound in place of the one given by Theorem~\ref{Frankl}, one can improve the bound on $n'_0(k,s,3)$ given in Theorem~\ref{thm1.1}. Further, the corresponding bound on $n_0(k,s,3)$ in Theorem \ref{thm1} can also be improved.

\bigskip
\noindent $\bullet$ Very recently, Wu and Zhang \cite{WuZ} established the $t$-intersecting Erd\H{o}s--Ko--Rado theorem in the $(r-1,2)$-norm, which confirms a conjecture of Brooks and Linz \cite{BrL}. Recall that the $t$-intersecting Erd\H{o}s--Ko--Rado theorem states that if $t\leq r\leq n$ and $\cF\subseteq \binom{[n]}{r}$ is $t$-intersecting, then for $n\geq n_0(r,t)$, $|\mathcal{F}|\leq \binom{n-t}{r-t}$.

\begin{thm}[Wu and Zhang \cite{WuZ}]
Let $t,r,n$ be positive integers such that $t\leq r\leq n$. If $\mathcal{F} \subseteq\binom{[n]}{r}$ is $t$-intersecting, then for $n \geq (t + 1)(r - t + 1)$, we have
\begin{eqnarray*}
\|\mathcal{F}\|_{r-1,2}\leq \binom{n-t}{r-t}\big(t + (n - r + 1)(r - t)\big),
\end{eqnarray*}
equality holds if and only if $\mathcal{F}=\big\{ F \in \binom{[n]}{r} : T \subseteq F \big\}$ for some $t$-subset $T$ of $[n]$.
\end{thm}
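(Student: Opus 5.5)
The plan is to rewrite the $(r-1,2)$-norm as a sum of weights over the members of $\mathcal{F}$, and then compare each weight with its value in a full $t$-star. By double counting,
\begin{eqnarray*}
\|\mathcal{F}\|_{r-1,2}=\sum_{S}d_{\mathcal{F}}(S)^2=\sum_{F\in\mathcal{F}}w(F),\qquad w(F):=\sum_{S\in\binom{F}{r-1}}d_{\mathcal{F}}(S).
\end{eqnarray*}
Equivalently, $w(F)=r+\#\{G\in\mathcal{F}: |F\cap G|=r-1\}$, which matches (1.1) with $k=2$.

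In the full $t$-star $\{F: T\subseteq F\}$ the weights are easy to compute. For each member $F$, its $r-t$ subsets of size $r-1$ that contain $T$ have degree $n-r+1$. Its $t$ subsets that miss a vertex of $T$ have degree $1$. Hence $w(F)=t+(r-t)(n-r+1)$ for every member, which gives exactly the stated bound $\binom{n-t}{r-t}\bigl(t+(n-r+1)(r-t)\bigr)$.

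\emph{Step 1: reduction to shifted families.} Lemma~\ref{newlem} with $k=2$ shows that shifting does not decrease $N(S^r_{r-1,2},\mathcal{F})$. Shifting also preserves $|\mathcal{F}|$ and the $t$-intersecting property (Frankl). So by (1.1), $\|\mathcal{F}\|_{r-1,2}=r|\mathcal{F}|+2N(S^r_{r-1,2},\mathcal{F})$ does not decrease, and we may assume $\mathcal{F}$ is shifted. Uniqueness is recovered at the end by an argument in the spirit of Lemma~\ref{thm1.2}: a non-star $t$-intersecting family cannot shift onto the full $t$-star.

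\emph{Step 2: the star case.} Suppose every member of $\mathcal{F}$ contains a fixed $t$-set $T$. Take an $(r-1)$-subset $S$ of some $F$ that misses a vertex of $T$. Any member containing $S$ must equal $S\cup\{x\}$ where $x$ is that missing vertex, so $d(S)\le 1$. Any $(r-1)$-set containing $T$ has degree at most $n-r+1$. Thus $w(F)\le t+(r-t)(n-r+1)$ for every $F$, and since $|\mathcal{F}|\le\binom{n-t}{r-t}$ the bound follows. Equality forces every such degree to be full, so $\mathcal{F}$ is the whole star.

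\emph{Step 3: the non-star case.} Trivially $w(F)\le r+r(n-r)$. If $\mathcal{F}$ is not contained in a $t$-star, a Hilton--Milner/Frankl type bound for non-trivial $t$-intersecting families gives $|\mathcal{F}|=O(n^{r-t-1})$. Then $\|\mathcal{F}\|_{r-1,2}=O(n^{r-t})$, which is strictly smaller than the star value of order $n^{r-t+1}$. This settles the theorem for $n$ sufficiently large.

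The main obstacle is reaching the sharp threshold $n\ge (t+1)(r-t+1)$, the Wilson/Frankl range, rather than only large $n$. For this I would work directly with shifted $t$-intersecting families. Such a family satisfies $|F\cap[t+2i]|\ge t+i$ for some $i$ for every member (Frankl's characterization). I would split $\mathcal{F}$ into the part containing $[t]$ and the rest, and bound the weights of the rest via an exchange/injection argument. The aim is to show that each member not containing $[t]$ costs more in lost star members, using Wilson's bound $|\mathcal{F}|\le\binom{n-t}{r-t}$ in this range, than it gains in adjacencies. I expect this weighted Ahlswede--Khachatrian style comparison to be the delicate part.
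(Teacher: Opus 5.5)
The paper does not prove this statement; it is quoted from Wu and Zhang in the concluding remarks. Your proposal therefore has to stand on its own, and as a proof \emph{in the claimed range} it has a genuine gap.

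Steps~2 and~3 are correct, and together they already give a complete proof for $n\ge n_0(r,t)$. Every $t$-intersecting family is either contained in a $t$-star (Step~2, with equality only for the full star) or is non-trivial. In the non-trivial case, $|\mathcal{F}|=O(n^{r-t-1})$ and $w(F)\le r(n-r+1)$ give $O(n^{r-t})$ against $\Theta(n^{r-t+1})$. Step~1 is not even needed for this.

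The theorem, however, asserts the bound for every $n\ge (t+1)(r-t+1)$, and this comparison is nowhere near that threshold. Take $t=1$. The Hilton--Milner bound is $(1-o(1))\binom{n-1}{r-1}$ whenever $n=O(r)$. The trivial weight $r(n-r+1)$ exceeds the star weight $1+(r-1)(n-r+1)$ by $n-r$. So the crude argument only closes once $n$ is of order roughly $r^2/\log r$, not $2r$. Your final paragraph, where all the content of the sharp statement lies, is only a plan. The exchange inequality (``each member not containing $[t]$ costs more in lost star members than it gains in adjacencies'') is never stated precisely, let alone proved.

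This gap cannot be closed with the ingredients you list. In the Wilson range the families $\mathcal{A}_i=\{F:|F\cap[t+2i]|\ge t+i\}$ are comparable in size to the star; at $n=(t+1)(r-t+1)$ one has $|\mathcal{A}_1|=\binom{n-t}{r-t}$ exactly. They also contain members heavier than any star member. If $r\ge t+2$ and $[t+2]\subseteq F\in\mathcal{A}_1$, then every $(r-1)$-subset of $F$ has degree $n-r+1$, so $w(F)=r(n-r+1)>t+(r-t)(n-r+1)$.

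Hence Wilson's bound $|\mathcal{F}|\le\binom{n-t}{r-t}$, combined with any member-by-member bound on $w$, cannot give the result. What is needed is a global weighted inequality for $\|\mathcal{F}\|_{r-1,2}=r|\mathcal{F}|+2N(S^r_{r-1,2},\mathcal{F})$ over shifted $t$-intersecting families, and it must be exactly tight at the threshold.

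There really are ties at the threshold. For $r=t+1$ and $n=2t+2$, the non-star $t$-intersecting family $\binom{[t+2]}{t+1}$ has $(r-1,2)$-norm $4\binom{t+2}{2}=2(t+1)(t+2)$, which equals the claimed bound. For $t=1$, $r=2$, $n=4$, the triangle and $K_{1,3}$ both give $12$. So the ``only if'' part as stated here is false at $n=(t+1)(r-t+1)$ in that case.

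Your uniqueness step, ``a non-star family cannot shift onto the full star'', is asserted without proof. For $n>(t+1)(r-t+1)$ it does follow from the uniqueness part of the Wilson/Ahlswede--Khachatrian theorem, since the preimage is a $t$-intersecting family of size $\binom{n-t}{r-t}$. It must, however, exclude the boundary case.
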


It is natural to consider the following problem, namely a version of the $t$-intersecting Erd\H{o}s--Ko--Rado theorem in the $(r-1,k)$-norm. In particular, Theorem \ref{cor1} provides a solution to this problem for the special case $t=1$.

\begin{prob}
Let $t,r,n$ be positive integers with $t\leq r\leq n$, and let $\mathcal{F} \subseteq\binom{[n]}{r}$ be a $t$-intersecting family. What is the maximum of $\|\mathcal{F}\|_{r-1,k}$ for $k\geq2$?
\end{prob}

\bigskip
\smallskip
\noindent{\bf{Funding}}
\smallskip


The research of Zhou and Yuan was supported by the National Natural Science Foundation of China (Nos.~12271337 and 12371347).

\bigskip
\noindent{\bf{Declaration of interest}}
\smallskip

The authors declare no known conflicts of interest.



\end{document}